\theoremstyle{plain}
\newcommand{\Addresses}{{
  \bigskip
  \footnotesize

  \textsc{Fakultät für Mathematik, Universität Regensburg, 93040 Regensburg, Germany}\par\nopagebreak
  \textit{E-mail address}: \texttt{yanshuai.qin@ur.de}

}}
\newcommand{\CO}{{\mathcal {O}}}
\newcommand{\CV}{{\mathcal {V}}}
\newcommand{\CX}{{\mathcal {X}}}
\newcommand{\CY}{{\mathcal {Y}}}
\newcommand{\Char}{{\mathrm{char}}}
\newcommand{\Br}{{\mathrm{Br}}}
\newcommand{\red}{{\mathrm{red}}}
\newcommand{\Gal}{{\mathrm{Gal}}}
\newcommand{\Hom}{{\mathrm{Hom}}}
\newcommand{\Ker}{{\mathrm{Ker}}}
\newcommand{\NS}{{\mathrm{NS}}}
\newcommand{\non}{{\mathrm{non}\text{-}}}
\newcommand{\Pic}{\mathrm{Pic}}
\font\cyr=wncyr10  \newcommand{\Sha}{\hbox{\cyr X}}
\newcommand{\tor}{{\mathrm{tor}}}
\DeclareMathOperator{\Spec}{Spec}
\newcommand{\QQ}{\mathbb{Q}}
\newcommand{\ZZ}{\mathbb{Z}} 
\newcommand{\FF}{\mathbb{F}} 
\newcommand{\GG}{\mathbb{G}}
\newcommand{\lra}{\longrightarrow}
\newcommand{\ra}{\rightarrow}
\newcommand{\et}{\mathrm{et}}
\newcommand{\SO}{{\mathscr{O}}}
\newcommand{\SF}{\mathscr{F}} 
\newcommand{\SA}{{\mathscr{A}}}
\newcommand{\SH}{{\mathscr{H}}}
\newtheorem{thm}{Theorem}[section]
\newtheorem{cor}[thm]{Corollary}
\newtheorem{lem}[thm]{Lemma}
\newtheorem{prop}[thm]{Proposition}
\newtheorem{conj}[thm]{Conjecture}
\theoremstyle{definition}
\theoremstyle{remark}
\newtheorem{rem}[thm]{Remark}
\begin{document}

\renewcommand{\thefootnote}{\fnsymbol{footnote}} 
\footnotetext{\emph{Key words}: Tate conjecture, Brauer group, Tate-Shafarevich group}   
\footnotetext{\emph{MSC classes}: 11G40, 14G17, 14J20, 11G25, 11G35}
\footnotetext{ The author is supported by the DFG through CRC1085 Higher Invariants (University of Regensburg).}
\title{On the Brauer groups of fibrations}
\author{Yanshuai Qin}

\maketitle

\begin{abstract}
Let $\CX\rightarrow C$ be a flat $k$-morphism between smooth integral varieties over a finitely generated field $k$ such that the generic fiber $X$ is smooth, projective and geometrically connected. Assuming that $C$ is a curve with function field $K$, we build a relation between the Tate-Shafarevich group of $\Pic^0_{X/K}$ and the geometric Brauer groups of $\CX$ and $X$, generalizing a theorem of Artin and Grothendieck for fibered surfaces to higher relative dimensions.
\end{abstract}

\section{Introduction}
For any regular noetherian scheme $X$,  the \emph{cohomological Brauer group} $\Br(X)$ is defined to be the \'etale cohomology group $H^2_{\et}(X,\GG_m)$.

Let $C$ be the spectrum of the ring of integers in a number field or a smooth projective geometrically connected curve over a finite field with function field $K$. Let $\mathcal{X}$ be a $2$-dimensional regular scheme and $\pi:\mathcal{X}\longrightarrow C$ be a proper flat morphism such that the generic fiber $X$ of $\pi$ is smooth and geometrically connected over $K$.  Artin and Grothendieck (cf.\cite[$\S$ 4]{Gro3} or \cite[Prop. 5.3]{Ulm}) proved that there is an isomorphism 
$$
\Sha(\Pic^{0}_{X/K})\cong \Br(\mathcal{X})
$$
up to finite groups. Assuming the finiteness of $\Sha(\Pic^{0}_{X/K})$ or $\Br(\mathcal{X})$,  Gordon \cite{Gor}, Milne \cite{Mil1}, Gonzales-Aviles \cite{Goa}, Liu-Lorenzini-Raynaud \cite{LLR1, LLR2}, Geisser \cite{Gei1} and Lichtenbaum-Ramachandran-Suzuki \cite{LRS} gave a precise relation between their orders.

In this paper, we will generalize the above result of Artin and Grothendieck to fibrations of higher relative dimensions over arbitrary finitely generated fields.  As an application, we give a simpler proof of Geisser's Theorem 1.1 in \cite{Gei2} and reprove the reduction theorem of the Tate conjecture to prime fields due to Andr\'e \cite{And2} and Ambrosi \cite{Amb}.
\subsection{Main results}
For any field $k$, denote by $k^s$ the separable closure. Denote by $G_k=\Gal(k^s/k)$ the absolute Galois group of $k$.
For a torsion abelian group $M$ and a prime number $p$, let $M(\non p)$ denote the subgroup of elements of order prime to $p$. We also define $M(\non p):=M$ for $p=0$. For two abelian groups $M$ and $N$, we say that they are \emph{almost isomorphic} if there exists a subquotient $M_1/M_0$ of $M$ (resp. $N_1/N_0$ of $N$) such that $M_0$ (resp. $N_0$) and $M/M_1$ (resp. $N/N_1$) are finite and $M_1/M_0\cong N_1/N_0$. Let $f:M\rightarrow N$ be a homomorphism with a finite cokernel and a kernel almost isomorphic to an abelian group $H$, in this situation, we say that the sequence $0\rightarrow H\rightarrow M\rightarrow N\rightarrow 0$ is exact up to finite groups.
\begin{thm}
\label{thm1.2}
Let $k$ be a finitely generated field of characteristic $p\geq 0$. Let $\mathcal{X}$ be a smooth geometrically connected variety over $k$ and $C$ be a smooth projective geometrically connected curve over $k$ with function field $K$. Let $\pi:\mathcal{X}\longrightarrow C$ be a flat $k$-morphism such that the generic fiber $X$ is smooth projective geometrically connected over $K$. Let $K^{\prime}$ denote $Kk^s$ and write $P$ for $\Pic^{0}_{X/K,\red}$. Define
$$\Sha_{K^\prime}(P):=\Ker(H^1(K^\prime, P)
\lra \prod_{v\in |C_{k^s}|}H^1(K_{v}^{sh}, P)),$$
where $|C_{k^s}|$ denotes the set of closed points in $C_{k^s}$ and $K_v^{sh}$ denotes the fraction field of a strictly henselian local ring of $C_{k^s}$ at $v$.
Then there is an exact sequence up to finite groups
$$
 0 \rightarrow \Sha_{K^\prime}(P)^{G_k}(\non p)\rightarrow \Br(\mathcal{X}_{k^s})^{G_k}(\non p)\rightarrow \Br(X_{K^s})^{G_K}(\non p)\rightarrow 0.
$$

\end{thm}
In the case that $k$ is a finite field of characteristic $p$ and $\CX$ is projective over $k$, the above result gives a generalization (up to $p$-torsion) of Artin-Grothendieck's theorem to higher relative dimensions in positive characteristics:
\begin{cor}\label{mainthm}
Let $\pi:\mathcal{X}\longrightarrow C$ be a proper flat $k$-morphism, where $C$ is a smooth projective geometrically connected curve over a finite field $k$ of characteristic $p$ with function field $K$. Assuming that $\mathcal{X}$ is smooth projective over $k$ and the generic fiber $X$ of $\pi$ is smooth projective geometrically connected over $K$, then there is an exact sequence up to finite groups
$$
0\rightarrow \Sha(\Pic_{X/K,\red}^0)(\non p)\rightarrow \Br(\mathcal{X})(\non p)\rightarrow \Br(X_{K^s})^{G_K}(\non p)\rightarrow 0.
$$
\end{cor}
In Theorem \ref{thm1.2}, when $k$ is finite, the canonical maps
 $\Sha(P)(\non p) \rightarrow \Sha_{K^\prime}(P)^{G_k}(\non p)$ and $\Br(\CX)\rightarrow \Br(\mathcal{X}_{k^s})^{G_k}$ have finite kernels and cokernels (cf. Proposition \ref{agsha} and \cite[Cor. 1.4]{Yua2}). Thus, Corollary \ref{mainthm} follows from Theorem \ref{thm1.2}. 
\begin{rem}
Two natural questions not pursued in our Theorem \ref{thm1.2} and Corollary \ref{mainthm} are
\begin{itemize}
    \item[(i)] the relation between $p$-primary part of those groups in characteristic $p>0$ and
\item[(ii)] formulas which express the discrepancy in the exact sequence.
\end{itemize}
For the first question, one may ask if the $p$-primary torsion of $\Br(X_{K^s})^{G_K}$ is still the obstruction of the Tate conjecture for divisors. D’Addezio\cite{DAd} proved that for an abelian variety $A/K$, the $p$-primary torsion of $\Br(A_{K^s})^{G_K}$ is of finite exponent. To address the $p$-torsion, one must delve into the flat cohomology of $X$ over an imperfect field. Establishing connections between flat cohomology over an imperfect field and crystalline or rigid cohomology poses a challenge. Thus, our existing approach cannot be extended to encompass the $p$-torsion scenario. For the second question, one may ask for an explicit bound for the cokernel of $\Br(X)\rightarrow \Br(X_{K^s})^{G_K}$. In this direction,  Colliot-Thél\`ene and Skorobogatov \cite[Thm. 2.2]{CTS1} gave interesting bounds for the cokernel of $\Br(X)\rightarrow \Br(X_{K^s})^{G_K}$ for some specific varieties over number fields.
\end{rem}

\subsection{Applications}
Let us first recall the Tate conjecture for divisors over finitely generated fields.
\begin{conj}\label{Tateconj} ( Conjecture $T^1(X,\ell)$)
Let $X$ be a projective and smooth variety over a finitely generated field $k$ of characteristic $p\geq0$, and $\ell\neq p$ be a prime number. Then the cycle class map\\
$$
\Pic(X)\otimes_\mathbb{Z}\mathbb{Q}_\ell\longrightarrow H^2_{\et}(X_{k^s},\mathbb{Q}_\ell(1))^{G_k}
$$
is surjective.
\end{conj}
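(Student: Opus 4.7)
I propose to attack the surjectivity directly, by translating it into finiteness of the $\ell$-primary Brauer group and then bounding that group via the fibration machinery developed in this paper. First, setting $\bar k := k^s$, the Kummer sequence on $X_{\bar k}$, inverse-limited over $\ell$-power torsion and tensored with $\QQ_\ell$, gives
$$
0 \longrightarrow \NS(X_{\bar k}) \otimes \QQ_\ell \longrightarrow H^2(X_{\bar k},\QQ_\ell(1)) \longrightarrow V_\ell\,\Br(X_{\bar k}) \longrightarrow 0,
$$
where $V_\ell\Br(X_{\bar k}) := T_\ell\Br(X_{\bar k})\otimes\QQ_\ell$. Taking $G_k$-invariants, and using finite generation of $\NS(X_{\bar k})$ together with the standard comparison of $\Pic(X)\otimes\QQ_\ell$ with $(\NS(X_{\bar k})\otimes\QQ_\ell)^{G_k}$ (modulo a controlled term in $H^1(G_k,\Pic^0(X_{\bar k}))$ handled by Mordell--Weil-type finiteness over finitely generated fields), one sees that the desired surjectivity is equivalent to the vanishing of $V_\ell\Br(X_{\bar k})^{G_k}$, equivalently to finiteness of $\Br(X_{\bar k})[\ell^\infty]^{G_k}$. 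The remainder of the proof will establish this finiteness.

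Second, I would exhibit this finiteness by fibering. After spreading $X/k$ out to a regular integral model over a finitely generated $\ZZ$-algebra and applying alteration together with a Lefschetz pencil, one produces a proper flat $\pi:\SX\to C$ whose geometric generic fiber recovers $X$ up to finite-kernel/cokernel effects on cohomological Brauer groups, with $\dim C < \dim\SX$. The high-relative-dimension generalization of Artin--Grothendieck promised in this paper then identifies $\Br(\SX)$, modulo finite groups, with a combination of $\Sha^1(K,\Pic^0_{X/K})$, a contribution from $\Br(C)$, and terms accounting for the higher direct images of $\GG_m$ along $\pi$ (which encode the variation of the Néron--Severi sheaf across fibers). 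The $\Sha$-piece is finite over a finitely generated ground field by the arithmetic theory of abelian varieties (Mordell--Weil plus specialization), while the $R^i\pi_*\GG_m$ terms are controlled after passing to a simultaneous Néron model of $\Pic^0_{X/K}$ over a dense open of $C$.

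Third, iterate by descending induction on transcendence degree. The remaining input is finiteness of $\Br(C)[\ell^\infty]^{G}$ for the arithmetic fundamental group $G$ of $C$, which is Conjecture~\ref{Tateconj} on a strictly lower-dimensional variety; iterating reduces to the case where $C$ has dimension at most one over the prime subfield. The hard part will be precisely this terminal stage: finiteness of $\ell$-primary Brauer groups of smooth projective varieties over $\FF_p$ or over $\QQ$ is itself the Tate conjecture for divisors over the prime subfield and is open in general. Thus the plan delivers an unconditional proof of Conjecture~\ref{Tateconj} in every case where the prime-subfield instance is known (abelian varieties, products of curves, K3 surfaces over finite fields, and so on), and in full generality it establishes the stated surjectivity modulo that terminal base case.
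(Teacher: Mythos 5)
The statement you are asked to prove is labeled as a \emph{conjecture} in the paper, and the paper neither proves it nor claims to: Conjecture~\ref{Tateconj} is the Tate conjecture for divisors, open in general, and the paper's actual contribution (Theorems~\ref{reduction} and~\ref{finitebrauer}) is a \emph{conditional reduction} of $T^1(X,\ell)$ over a finitely generated field $K$ to $T^1$ over the prime subfield of $K$. Your proposal does not close this gap either --- you concede as much in your final sentence --- so what you have written is not a proof of the statement but a sketch of essentially the same reduction the paper carries out: translate surjectivity of the cycle class map into finiteness of $\Br(X_{K^s})^{G_K}(\ell)$ (this is Proposition~\ref{prop2.1}, quoted from Yuan), fiber over a curve $C$ with function field $K$ defined over a field of smaller transcendence degree, use the exact sequence of Theorem~\ref{thm1.2} to pass finiteness from $\Br(\CX_{l^s})^{G_l}$ to $\Br(X_{K^s})^{G_K}$, and induct down to the prime field. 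The terminal base case is precisely the open problem, so no unconditional proof of Conjecture~\ref{Tateconj} can emerge from this route.

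Beyond that structural issue, one concrete assertion in your sketch is false: you claim the $\Sha$-piece, i.e.\ $\Sha^1(K,\Pic^0_{X/K})$, ``is finite over a finitely generated ground field by the arithmetic theory of abelian varieties (Mordell--Weil plus specialization).'' Finiteness of Tate--Shafarevich groups is itself a deep open conjecture --- over a global function field it is equivalent to the BSD conjecture for the abelian variety (the paper cites Milne and Schneider for exactly this equivalence in the discussion around Corollary~\ref{Gei}). Mordell--Weil finiteness of $A(K)$ gives no control over $H^1(K,A)$. Fortunately this finiteness is not needed for the reduction: in the sequence of Theorem~\ref{thm1.2} the group $\Sha_{K'}(P)^{G_k}(\non p)$ sits on the \emph{left}, so finiteness of $\Br(\CX_{k^s})^{G_k}(\non p)$ implies finiteness of the quotient $\Br(X_{K^s})^{G_K}(\non p)$ regardless of whether the $\Sha$-term is finite. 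You should delete that claim and rely only on the right-exactness (up to finite groups) of the sequence, which is the genuinely hard analytic content of the paper (the pull-back trick of Lemma~\ref{keylem}).
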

The conjecture is proved for abelian varieties by Tate \cite{Tat3} (over finite fields), Zarhin \cite{Zar1, Zar2} (over positive characteristics) and Faltings \cite{Fal1, Fal2} (over characteristic zero), for K3 surfaces over characteristic zero by Andr\'e \cite{And1} and Tankeev \cite{Tan4, Tan5}, for $K3$ surfaces over positive characteristics by Nygaard \cite{Nyg}, Nygaard-Ogus \cite{NO}, Artin-Swinnerton-Dyer \cite{ASD}, Maulik \cite{Mau}, Charles \cite{Cha} and Madapusi-Pera \cite{MP}.

By a well-known result (cf. \cite[Prop. 2.5]{SZ1} or \cite[Thm. 1.2]{Yua2} ), $T^1(X,\ell)$ is equivalent to the finiteness of $\Br(X_{k^s})^{G_k}(\ell)$. Thus, Corollary \ref{mainthm} implies the following Theorem of Geisser\cite{Gei2}:
 \begin{cor}\label{Gei}
Notations as in Corollary \ref{mainthm}. Let $\ell\neq p$ be a prime, then 
$T^{1}(\mathcal{X},\ell)$ is equivalent to the finiteness of  $\Sha(\Pic^{0}_{X/K,\red})(\ell)$ and $T^1(X,\ell)$.
\end{cor}
\begin{rem}
Geisser \cite{Gei2} first studied fibrations of higher relative dimensions over finite fields by using \'etale motivic cohomology theory. He proved Corollary \ref{Gei} (cf. \cite[Thm. 1.1]{Gei2}) and gave a precise relation between the orders of the Tate-Shafarevich group of the Albanese variety of the generic fiber and the Brauer group of $\CX$ (cf. \cite[Thm. 1.2]{Gei2}) under the assumption of the finiteness of $\Br(\CX)$. For fibrations over the spectrum of the ring of integers in a number field of relative dimension $>1$, the above question was first studied by Tankeev (cf. \cite{Tan1,Tan2,Tan3}). In \cite{Qin}, we will prove a version of Corollary \ref{mainthm} over the spectrum of the ring of integers in a number field.
\end{rem}
Since Brauer groups are the obstruction of the Tate conjecture for divisors, in combination with a spreading out argument, Theorem\ref{thm1.2} will imply the following reduction theorem of the Tate conjecture due to Andr\'e \cite{And2} and Ambrosi \cite{Amb}:
\begin{thm}\label{reduction}
Let $k$ be a prime field and $K$ be a finitely generated field over $k$. Let $\ell$ be a prime different from char$(k)$. Assuming that $T^1(\mathcal{X},\ell)$ is true for all smooth projective varieties $\mathcal{X}$ over $k$, then $T^1(X,\ell)$ holds for all smooth projective varieties $X$ over $K$.
\end{thm}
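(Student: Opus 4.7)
The plan is to prove Theorem $\ref{reduction}$ by induction on $n := \mathrm{trdeg}(K/k)$, using the paper's fibration theorem to power the inductive step. By Proposition $\ref{prop2.1}$, $T^1(X,\ell)$ for smooth projective $X/K$ is equivalent to the finiteness of $\Br(X_{K^s})^{G_K}(\ell)$, so I recast the entire problem as one of propagating finiteness of Galois-invariant geometric Brauer groups. The base case $n = 0$ is exactly the standing hypothesis, since then $K = k$.

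For the inductive step, fix $X/K$ smooth projective with $\mathrm{trdeg}(K/k) = n \geq 1$, and choose a subfield $K_0 \subset K$ with $\mathrm{trdeg}(K_0/k) = n-1$ and $K/K_0$ of transcendence degree one. Then $K = K_0(C)$ for a smooth projective geometrically connected curve $C$ over $K_0$. Standard spreading out, combined with resolution of singularities in characteristic zero or de Jong's alterations in positive characteristic (after replacing $X$ by a suitable finite cover, which is harmless for $T^1(\cdot,\ell)$), produces a proper flat morphism $\pi \colon \mathcal{X} \to C$ with generic fiber $X$, where $\mathcal{X}$ is smooth projective over $K_0$. I then invoke the paper's generalization of the Artin--Grothendieck comparison to fibrations of arbitrary relative dimension over a finitely generated base field: applied to $\pi$, it yields, up to finite groups and $G_{K_0}$-equivariantly, a comparison between $\Br(\mathcal{X}_{\overline{K_0}})(\ell)$ and $\Br(X_{\bar K})(\ell)$, from which finiteness of $\Br(\mathcal{X}_{\overline{K_0}})^{G_{K_0}}(\ell)$ forces finiteness of $\Br(X_{\bar K})^{G_K}(\ell)$. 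Since $\mathcal{X}$ is smooth projective over $K_0$ with $\mathrm{trdeg}(K_0/k) = n-1$, the inductive hypothesis supplies the former, and the conclusion follows.

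The principal obstacle is producing a sufficiently regular model $\pi \colon \mathcal{X} \to C$ for which the fibration theorem is applicable. In characteristic zero this is immediate from Hironaka; in positive characteristic one must use de Jong's alterations and verify that finiteness of Tate-type Brauer groups behaves well under finite generically-\'etale covers via standard trace--norm arguments, which is a technicality rather than a substantive difficulty. A secondary, purely bookkeeping, point is extracting the control on the Galois invariants $\Br(\cdot_{\overline{K_0}})^{G_{K_0}}$ from a comparison originally formulated at the level of the arithmetic Brauer groups, which is routine via the Hochschild--Serre spectral sequence; the finite-group corrections introduced at each stage are absorbed into the $\ell$-primary finiteness statement.
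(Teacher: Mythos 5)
Your overall strategy---induction on $\mathrm{trdeg}(K/k)$, with Proposition \ref{prop2.1} converting $T^1(X,\ell)$ into finiteness of $\Br(X_{K^s})^{G_K}(\ell)$ and the fibration theorem (Theorem \ref{thm1.2}) powering the inductive step---is exactly the paper's; the paper proves Theorem \ref{finitebrauer} this way and obtains Theorem \ref{reduction} by rerunning the argument $\ell$-primary part by $\ell$-primary part. The gap is in your construction of the model when $k=\FF_p$. You ask for a proper flat $\pi\colon\CX\to C$ with $\CX$ smooth projective over $K_0$ \emph{and} generic fiber smooth projective over $K$, to be produced by de Jong's alterations ``after replacing $X$ by a suitable finite cover.'' But an alteration of a compactified total space $\overline{\CX}\to C$ is a global modification: its restriction over the generic point of $C$ is a new variety $X'$, and smoothness of the altered total space over $K_0$ does not force $X'$ to be smooth over the function field of the (possibly new) base curve, since generic smoothness fails in characteristic $p$ (quasi-elliptic fibrations of smooth surfaces are the standard example). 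Conversely, first fixing the generic fiber by a cover and then altering the total space disturbs the generic fiber again, so your parenthetical does not break the circularity. Thus in the case $k=\FF_p$ the object your induction runs on is not shown to exist. (In characteristic zero your construction is fine, since Hironaka's resolution can be taken to be an isomorphism over the smooth locus.)

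The missing idea is precisely how the paper avoids this: Theorem \ref{thm1.2} is deliberately stated for $\CX$ smooth but \emph{not necessarily proper} over the base field, with $\pi$ merely dominant. One therefore takes $\CX=\pi^{-1}(U)$ for an open $U\subseteq C$ over which the spread-out family is smooth projective; this $\CX$ is automatically smooth over $K_0$ with no resolution needed, and the theorem already gives that $\Br(\CX_{K_0^s})^{G_{K_0}}(\ell)\to\Br(X_{K^s})^{G_K}(\ell)$ has finite cokernel. The alteration enters only afterwards, purely as a map of $K_0$-varieties with no compatibility with $\pi$ required: Proposition \ref{purity} and Lemma \ref{finiteker} reduce the finiteness of $\Br(\CX_{K_0^s})^{G_{K_0}}(\ell)$ to that of a smooth projective variety over a finite extension of $K_0$, where the inductive hypothesis applies. (Your secondary point about Hochschild--Serre is also unnecessary: Theorem \ref{thm1.2} is already formulated in terms of $\Br(\CX_{k^s})^{G_k}$ rather than the arithmetic Brauer group.)
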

\begin{rem}
$T^1(X,\ell)$ for smooth projective varieties over finite fields has been reduced to $T^1(X,\ell)$ for smooth projective surfaces $X$ over finite fields with $H^1(X,\CO_X)=0$ by the work of de Jong \cite{deJ2}, Morrow \cite[Thm. 4.3]{Mor} and Yuan \cite[Thm. 1.6]{Yua1}. In \cite{Kah},  Kahn proved that Tate conjecture in codimension 1 over a finitely generated field follows from the same conjecture for surfaces over its prime subfield.
\end{rem}

 In \cite{SZ1}, Skorobogatov and Zarhin conjectured that $\Br(X_{k^s})^{G_k}$ is finite for any smooth projective variety $X$ over a finitely generated field $k$. In \cite{SZ1,SZ2}, they proved the finiteness of $\Br(X_{k^s})^{G_k}(\non p)$ for abelian varieties and $K3$ surfaces($\Char(k)\neq 2$). By our Theorem \ref{thm1.2}, we reduce the question about the finiteness of $\Br(X_{k^s})^{G_k}(\non p)$ to the question over prime fields:
\begin{thm}\label{finitebrauer}
Let $k$ be a prime field and $K$ be a field finitely generated over $k$. Then the following statements are true.
\begin{itemize}
\item[(1)] If $k=\QQ$ and $\Br(X_{k^s})^{G_k}$ is finite for any smooth projective variety $X$ over $k$, then $\Br(X_{K^s})^{G_K}$ is finite for any smooth projective variety $X$ over $K$.
\item[(2)] If $k=\FF_p$ and the Tate conjecture for divisors holds for any smooth projective variety over $k$, then $\Br(X_{K^s})^{G_K}(\non p)$ is finite for any smooth projective variety $X$ over $K$.
\end{itemize}
\end{thm}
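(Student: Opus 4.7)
The plan is induction on $n := \mathrm{tr.deg}_k K$. The base case $n=0$ (so $K=k$) is the hypothesis directly in case (1); in case (2), Proposition~\ref{prop2.1} converts the assumed Tate conjecture into finiteness of $\Br(X_{k^s})^{G_k}(\ell)$ for each $\ell\neq p$, and a standard Frobenius-eigenvalue argument on $H^2_{\et}(X_{k^s},\QQ_\ell(1))$ promotes this to finiteness of the whole $\non p$-part.

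For the inductive step, pick a separating transcendence basis $t_1,\dots,t_n$ of $K/k$ and let $K_0$ be the algebraic closure of $k(t_1,\dots,t_{n-1})$ in $K$; then $K_0/k$ is finitely generated of transcendence degree $n-1$, and $K = K_0(C)$ for a smooth projective geometrically connected curve $C/K_0$. Given a smooth projective geometrically connected $X/K$, I spread out to a proper flat model $\widetilde{\CX}\to C$ with generic fiber $X$ and let $\CX$ be the smooth locus of $\widetilde{\CX}\to C$: since $C$ is smooth over $K_0$, $\CX$ is smooth geometrically connected over $K_0$ and retains $X$ as generic fiber. Applying Theorem~\ref{thm1.2} with $K_0$ in place of $k$ yields an exact-up-to-finite-groups sequence
\[
0 \to \Sha_{K^\prime}(P)^{G_{K_0}}(\non p) \to \Br(\CX_{K_0^s})^{G_{K_0}}(\non p) \to \Br(X_{K^s})^{G_K}(\non p) \to 0,
\]
reducing the claim to finiteness of the middle term.

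Since the inductive hypothesis concerns smooth \emph{projective} varieties, I compactify $\CX$: Hironaka produces a smooth projective $\overline{\CX}\supset \CX$ in characteristic zero, while for each $\ell\neq p$ in positive characteristic Gabber's refinement of de Jong's alterations yields a smooth projective $\overline{\CX}/K_0$ with a generically finite degree-prime-to-$\ell$ map to a Nagata compactification of $\CX$; in either case, I arrange that the open subset $U\subseteq \overline{\CX}$ (equal to $\CX$ in characteristic zero, or to the preimage of $\CX$ under the alteration in characteristic $p$) has strict normal crossings complement $D = \overline{\CX}\setminus U = \bigcup_v D_v$. The Gysin/purity sequence
\[
\Br(\overline{\CX}_{K_0^s})(\ell) \to \Br(U_{K_0^s})(\ell) \to \bigoplus_v H^1(D_{v,K_0^s},\QQ_\ell/\ZZ_\ell(-1)),
\]
combined in characteristic $p$ with the degree-prime-to-$\ell$ injectivity $\Br(\CX_{K_0^s})(\ell)\hookrightarrow \Br(U_{K_0^s})(\ell)$, reduces, after taking $G_{K_0}$-invariants, to finiteness of $\Br(\overline{\CX}_{K_0^s})^{G_{K_0}}(\ell)$ (the inductive hypothesis, as $\mathrm{tr.deg}_k K_0 = n-1$) and of each $H^1(D_{v,K_0^s},\QQ_\ell/\ZZ_\ell)^{G_{K_0}}$ (finite by the Tate conjecture for the abelian variety $\Pic^0(D_v)/K_0$, known unconditionally by Faltings--Zarhin).

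\textbf{Main obstacle.} The crux is the compactification step in positive characteristic: without Hironaka one must work prime by prime with Gabber's $\ell$-alterations and then combine the resulting per-$\ell$ finiteness statements into the uniform $\non p$-finiteness that the theorem demands, which hinges on $\Br(X_{K^s})^{G_K}(\ell) = 0$ for almost all $\ell$ --- a fact parallel to the Frobenius-eigenvalue argument in the base case but over the more general finitely generated field $K$.
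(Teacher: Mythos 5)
Your overall strategy coincides with the paper's: induct on the transcendence degree, realize $K$ as the function field of a curve $C$ over an intermediate field $K_0$ of one lower transcendence degree, spread $X$ out to a smooth model $\CX\to C$, invoke Theorem \ref{thm1.2} to reduce to finiteness of $\Br(\CX_{K_0^s})^{G_{K_0}}(\non p)$, and then compare the non-proper total space $\CX$ with a smooth projective variety over $K_0$ so the inductive hypothesis applies. The only real divergence is in that last comparison step, and here your proposal manufactures a difficulty the paper does not have. You reach for Hironaka in characteristic $0$ and for Gabber's $\ell'$-alterations prime by prime in characteristic $p$, and you correctly flag that the per-$\ell$ outputs of the Gabber route do not obviously reassemble into finiteness of the full $\non p$-part (which includes vanishing for almost all $\ell$). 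The paper avoids this entirely: it uses a \emph{single} de Jong alteration $f:\CX'\to\CX$ (after shrinking, finite flat of some degree $d$) with $\CX'$ admitting a smooth projective compactification over a finite extension of $K_0$. By Lemma \ref{finiteker} the kernel of $\Br(\CX_{K_0^s})(\non p)\lra\Br(\CX'_{K_0^s})(\non p)$ is killed by $d$, and since each $\ell$-primary part is of cofinite type (Lemma \ref{cofbr}) this kernel is finite and vanishes for $\ell\nmid d$ --- so uniformity in $\ell$ is free and no prime-to-$\ell$ condition on the degree is needed. The passage from $\CX'$ to its smooth projective compactification is then exactly Proposition \ref{purity} (already stated for the full $\non p$-part), whose boundary terms $H^1(D_{K_0^s},\QQ_\ell/\ZZ_\ell)^{G_{K_0}}$ are shown in the accompanying lemma to be finite and zero for almost all $\ell$ by specialization to a finite field rather than by Faltings--Zarhin; your Gysin-sequence argument is essentially a re-derivation of that proposition. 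So your proof is correct in substance once you replace the Gabber step by a plain de Jong alteration plus the ``killed by $d$ and cofinite type'' observation; as written, the ``main obstacle'' you identify is a genuine gap in your version but not in the argument the paper runs.
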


\subsection{Sketch of the proof of Theorem \ref{thm1.2}}
For the proof of Theorem \ref{thm1.2}, the left exactness will follow from Grothendieck's argument in \cite[$\S$ 4]{Gro3}. The tricky part is to show that the natural map
$$\Br(\CX_{k^s})^{G_k}(\non p)\lra \Br(X_{K^s})^{G_K}(\non p)$$
has a finite cokernel. The key idea is to use a pull-back trick developed by Colliot-Thél\`ene and Skorobogatov (cf. \cite{CTS1} or \cite{Yua2} ) to reduce it to cases of relative dimension $1$. Choose an open dense subset $U\subseteq C$ such that $\pi^{-1}(U)\lra U$ is smooth and proper. Write $\CV$ for $\pi^{-1}(U)$. By purity for Brauer groups, the natural map
$$\Br(\CX_{k^s})^{G_k}(\non p)\lra \Br(\CV_{k^s})^{G_k}(\non p)$$
is injective and has a finite cokernel. It suffices to show that
$$\Br(\CV)(\non p)\lra \Br(X_{K^s})^{G_K}(\non p)$$
has a finite cokernel. For simplicity, we assume that $X(K)$ is not empty. By the spectral sequence
$$ E^{p,q}_2=H^p(U,R^q\pi_*\GG_m)\Rightarrow H^{p+q}(\CV,\GG_m),$$
we get an exact sequence
$$ H^2(\CV,\GG_m)\lra H^0(U,R^2\pi_*\GG_m)\stackrel{d_2^{0,2}}{\lra} H^2(U,R^1\pi_*\GG_m).$$
And we will show there is a canonical isomorphism
 $$H^0(U,R^2\pi_*\GG_m)(\non p)\cong\Br(X_{K^s})^{G_K}(\non p).$$
So the question is reduced to show the vanishing of $d^{0,2}_{2}$. In the case of relative dimension $1$, this is obvious since $R^2\pi_*\GG_m=0
 $ by Artin's theorem \cite[Cor. 3.2]{Gro3}. In general, taking a smooth projective curve $Y\subseteq X$, then spreading out, we get a relative curve $\pi^\prime:\CY\lra C$  and a $C$-morphism $\CY\lra \CX$. This gives a commutative diagram
\begin{displaymath}
\xymatrix{ 
H^0(U,R^2\pi_*\GG_m)\ar[r] \ar[d]& H^2(U,R^1\pi_*\GG_m) \ar[d] 
\\
H^0(U,R^2\pi^\prime_*\GG_m) \ar[r] & H^2(U,R^1\pi^\prime_*\GG_m)
}
\end{displaymath}
We hope that the second column is injective. In fact, we need finitely many $\pi^{i}:\CY_i\lra C$ such that the induced morphism of abelian sheaves
$$R^1\pi_*\GG_m\lra \bigoplus_i R^1\pi_*^{i}\GG_m$$
is split up to abelian sheaves of finite exponent. The splitting
implies that the natural map
$$H^2(U,R^1\pi_*\GG_m)\lra \bigoplus_i H^2(U,R^1\pi^i_*\GG_m)$$
has a kernel of finite exponent. It follows that $d^{0,2}_2$ has an image of finite exponent. This shows that the cokernel of the natural map at the beginning is of finite exponent. Since the groups are cofinitely generated (cf. \S 2.3), this property of finite exponent translates to finite cokernel.

To get this splitness, we will choose $Y_i$ as the complete intersection of very ample divisors on $X$ such that there exists an abelian variety $A/K$ and a $G_K$-equivariant map
$$\Pic(X_{K^s})\times A(K^s)\lra \bigoplus_i\Pic(Y_{i,K^s})$$
with finite kernel and cokernel (cf. \cite[\S 3.3, p17]{Yua2}). 

\subsection*{Notations}
By a \emph{finitely generated field}, we mean a field which is finitely generated over a prime field.
By a \emph{variety} over a field $k$, we mean a scheme which is separated and of finite type over $k$. A morphism between varieties over $k$ is a $k$-morphism. For a smooth proper geometrically connected variety $X$ over a field $k$, we use $\Pic^0_{X/k}$ to denote the identity component of the Picard scheme $\Pic_{X/k}$. Denote by $\Pic^0_{X/k,\red}$ the underlying reduced closed subscheme of $\Pic^0_{X/k}$. The default sheaves and cohomology over schemes are with respect to the small \'etale site. So $H^i$ is the abbreviation of $H_{\et}^i$. For any abelian group $M$, integer $m$ and prime $\ell$, we set\\
$$M[m]=\{x\in M| mx=0\},\quad M_{\tor}=\bigcup\limits_{m\geq 1}M[m],\quad  M(\ell)=\bigcup\limits_{n\geq 1}M[\ell^n].$$
\section{Preliminary results}
\subsection{The Galois invariant parts of geometric Brauer groups}
In this section, we will show that the Galois invariant part of the geometric Brauer group of a smooth variety is a birational invariant up to finite groups. This will allow us to shrink the base of a fibration without changing the question.

The following results in the case of characteristic $0$ were proved in \cite{CTS1} (cf. \cite[Prop. 6.1 and Thm. 6.2 (iii)]{CTS1}).
\begin{prop}\label{purity}
Let $X$ be a smooth geometrically connected variety over a finitely generated field $k$ of characteristic $p\geq 0$. Let $U\subseteq X$ be an open dense subset. Then the natural map
$$ \Br(X_{k^s})^{G_k}(\non p)\lra \Br(U_{k^s})^{G_k}(\non p)$$
is injectie and has a finite cokernel.
\end{prop}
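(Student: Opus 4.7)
The plan is to adapt the characteristic-zero argument of Colliot-Thélène–Skorobogatov (\cite{CTS}, Prop.~6.1 and Thm.~6.2(iii)), replacing Grothendieck's absolute cohomological purity by Gabber's theorem, which applies to prime-to-$p$ torsion on regular schemes in any characteristic. First I would reduce to the case that $D := X \setminus U$ is smooth of pure codimension one in $X$. Set $Z := X \setminus U$ with its reduced scheme structure, let $Z_{\mathrm{sing}}$ be its non-regular locus, and put $X_1 := X \setminus Z_{\mathrm{sing}}$. Then $X_1$ is smooth and contains $U$, while $Z \cap X_1$ is smooth in $X_1$. Applying noetherian induction on $\dim Z$ to the inclusion $X_1 \hookrightarrow X$ (whose complement is strictly lower-dimensional than $Z$) reduces the question to $U \hookrightarrow X_1$. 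If the smooth closed complement has codimension $\geq 2$, Gabber's purity forces $\Br(X_{1,k^s})(\non p) \xrightarrow{\sim} \Br(U_{k^s})(\non p)$, so only the codimension-one case remains.

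In that case, write the smooth codimension-one complement as $\bigsqcup_i D_i$. Gabber's purity theorem yields a $G_k$-equivariant exact sequence
$$
0 \to \Br(X_{k^s})(\non p) \to \Br(U_{k^s})(\non p) \xrightarrow{\partial} \bigoplus_i H^1(D_{i,k^s}, \QQ/\ZZ)(\non p).
$$
Taking $G_k$-invariants is left exact, so injectivity is immediate, and the cokernel of $\Br(X_{k^s})^{G_k}(\non p) \to \Br(U_{k^s})^{G_k}(\non p)$ embeds into $\bigoplus_i H^1(D_{i,k^s}, \QQ/\ZZ)(\non p)^{G_k}$.

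The main obstacle is then to show that each $H^1(D_{i,k^s}, \QQ/\ZZ)(\non p)^{G_k}$ is finite. After possibly replacing $k$ by a finite extension over which $D_i$ becomes geometrically integral, I would pass to a smooth projective compactification $\bar D_i$ (using a de Jong alteration when resolution is unavailable, which affects prime-to-$p$ cohomology only up to finite kernels and cokernels), and identify $H^1(\bar D_{i,k^s}, \QQ_\ell/\ZZ_\ell)$, via the Kummer sequence and a Tate twist, with the Pontryagin dual of $T_\ell\,\mathrm{Alb}(\bar D_i)$. This is a pure weight-one Galois representation; over a finitely generated field its $G_k$-invariants are finite, by semisimplicity of $V_\ell$ of an abelian variety (Faltings–Zarhin–Tate) combined with Deligne's weight bounds (reduced from the finite-field case by spreading out) and Mordell-Weil applied to any constant isogeny factor. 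A standard excision argument transfers this finiteness back from $\bar D_i$ to $D_i$, completing the proof.
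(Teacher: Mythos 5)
Your outline coincides with the paper's proof in all essential respects: both reduce to a regular codimension-one complement via purity in codimension $\geq 2$, both bound the cokernel by $\bigoplus_i H^1(D_{i,k^s},\QQ_\ell/\ZZ_\ell)^{G_k}$ using the Gysin/purity sequence, and both prove finiteness of that group by passing to a compactification via a de Jong alteration and reducing to Galois invariants of (duals of) torsion of abelian varieties, settled by specialization to a closed point of a model. The one substantive divergence is the final finiteness lemma: the paper avoids semisimplicity and weights entirely, observing that $\Hom(A[\ell^n],\ZZ/\ell^n)$ spreads out to a locally constant sheaf on an integral model $S$, so its global sections inject into the invariants of the stalk at a closed point $s$, where the group has order bounded by $|\SA_s(k(s))|$ --- a single integer that works for every $n$ and every $\ell$ invertible at $s$.

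One point in your write-up needs repair before it is a proof. You assert finiteness of the whole group $H^1(D_{i,k^s},\QQ/\ZZ)(\non p)^{G_k}$, but the justification you give (pure of weight one, hence finite invariants) is an $\ell$-by-$\ell$ statement; finiteness of each $\ell$-primary component does not imply finiteness of the direct sum over all $\ell\neq p$, which is what the statement about $(\non p)$-parts requires. You need uniformity in $\ell$: a bound on the order independent of $n$ and $\ell$, and vanishing for all but finitely many $\ell$. Your own reduction to the finite-field case supplies this once made explicit --- after specializing, the invariants are killed by $\det(1-\Frob_s\mid H^1)$, a nonzero integer independent of $\ell$ --- and the same uniformity must be tracked for the twisted boundary terms $H^0(\,\cdot\,,\ZZ/\ell^n(-1))^{G_k}$ that your ``standard excision argument'' produces when comparing $D_i$ with $\bar D_i$ (these are governed by the cyclotomic character and are again uniformly bounded after specialization). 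Two minor remarks: the appeal to semisimplicity and to Mordell--Weil is unnecessary, since a trivial quotient of $V_\ell A$ already forces a Frobenius eigenvalue equal to $1$ at a closed point, contradicting weights; and in positive characteristic the regular locus of $Z$ need not be smooth over $k$, so the passage from ``regular'' to ``smooth'' complement requires a (possibly purely inseparable) finite extension of $k$, which the paper handles explicitly and which is harmless since it does not change $G_k$ up to finite index.
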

\begin{proof}
The injectivity follows from that $X_{k^s}$ is regular and irreducible. To show that the cokernel is finite, we need the lemma below. Let $Y$ denote $X-U$ with reduced scheme structure. By purity for Brauer group (cf. \cite[Thm. 1.1]{Ces}), removing a close subset of codimension $\geq 2$ will not change the Brauer group. Thus, by shrinking $X$, we may assume that $Y$ is regular and of codimension $1$ in $X$.  Let $\ell \neq p$ be a prime. By purity for Brauer group (cf. \cite[\S 6]{Gro3} and \cite{Fuj}), there is a canonical exact sequence
$$0\lra \Br(X_{k^s})(\ell)\lra \Br(U_{k^s})(\ell)\lra H^1(Y_{k^s},\QQ_{\ell}/\ZZ_{\ell}).
$$
Thus, it suffices to show that the group
$$H^1(Y_{k^s},\QQ_{\ell}/\ZZ_\ell)^{G_k}$$
is finite and vanishes for all but finitely many $\ell$. This follows from the lemma below.
\end{proof} 
The following lemma in characteristic $0$ was proved by Colliot-Thél\`ene and Skorobogatov (cf. \cite[Prop. 6.1]{CTS1}). For completeness, we present the proof for the cases in arbitrary characteristics, following their arguments.
\begin{lem}
Let $U$ be a regular variety over a finitely generated field $k$. Let $\ell\neq \Char(k)$ be a prime. Then 
$$H^1(U_{k^s},\QQ_{\ell}/\ZZ_\ell)^{G_k}$$
is finite and vanishes for all but finitely many $\ell$.
\end{lem}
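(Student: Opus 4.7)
The plan is to reduce to the smooth projective case via alterations and purity for étale cohomology, and then to invoke the Weil bounds on $\ell$-adic Galois representations together with the fact that the cyclotomic character has infinite image.

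First, by de Jong's alterations (or Hironaka's resolution in characteristic zero), I find an alteration $f : V \to U$ with $V$ smooth and admitting an open immersion into a smooth projective variety $X$ over $k$ whose complement $D = X \setminus V$ is a strict normal crossings divisor with components $D_i$. The trace map shows that the kernel of $f^{*} : H^{1}(U_{k^s}, \QQ_\ell/\ZZ_\ell) \to H^{1}(V_{k^s}, \QQ_\ell/\ZZ_\ell)$ is annihilated by $\deg f$, so I may replace $U$ by $V$; moreover $\deg f$ is an $\ell$-unit for almost all $\ell$, so this substitution is harmless for both claims. The Gysin/localization sequence then yields
\begin{equation*}
0 \to H^{1}(X_{k^s}, \QQ_\ell/\ZZ_\ell) \to H^{1}(V_{k^s}, \QQ_\ell/\ZZ_\ell) \to \bigoplus_{i} H^{0}(D_{i,k^s}, \QQ_\ell/\ZZ_\ell(-1)),
\end{equation*}
so it suffices to bound the $G_k$-invariants of the two extremes.

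For the smooth projective term, Kummer theory on $X_{k^s}$ identifies $H^{1}(X_{k^s}, \QQ_\ell/\ZZ_\ell(1))$ with $\Pic(X_{k^s})[\ell^\infty]$; since $\NS(X_{k^s})$ is finitely generated (N\'eron--Severi theorem), this equals $A[\ell^\infty]$ up to a finite group that vanishes for almost all $\ell$, where $A = \Pic^{0}_{X/k,\red}$. Hence $H^{1}(X_{k^s}, \QQ_\ell/\ZZ_\ell)^{G_k}$ is, up to finite error, $A[\ell^\infty](-1)^{G_k}$. By spreading $X$ out over a finitely generated $\ZZ$-algebra and specializing to a closed point of the base of residue field $\FF_q$, a Frobenius element $\Phi \in G_k$ acts on $V_\ell A(-1)$ with eigenvalues of absolute value $\sqrt{q}$ (Weil's Riemann hypothesis for abelian varieties), so $\Phi - 1$ is invertible on $V_\ell A(-1)$; a snake lemma argument on $0 \to T_\ell A(-1) \to V_\ell A(-1) \to A[\ell^\infty](-1) \to 0$ then gives $A[\ell^\infty](-1)^{G_k} \hookrightarrow T_\ell A(-1)/(\Phi - 1)$, which is finite. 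Since $\det(\Phi - 1)$ is a nonzero algebraic integer independent of $\ell$, it is an $\ell$-unit outside a finite set of primes, forcing the invariants to vanish for almost all $\ell$.

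For the boundary term, $\bigoplus_{i} H^{0}(D_{i,k^s}, \QQ_\ell/\ZZ_\ell(-1))$ is the tensor product of a finite permutation $G_k$-module with $\QQ_\ell/\ZZ_\ell(-1)$. Choose a finite Galois extension $L/k$ trivializing the permutation action; the $G_L$-invariants are then a direct sum of copies of $(\QQ_\ell/\ZZ_\ell(-1))^{G_L}$. This last group is finite for every $\ell$ because the cyclotomic character $\chi_\ell : G_L \to \ZZ_\ell^\times$ has infinite image (the finitely generated field $L$ contains only finitely many roots of unity, so in particular no copy of $\mu_{\ell^\infty}$), and it vanishes for every $\ell$ with $\mu_\ell \not\subset L$, which excludes only finitely many primes. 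The main obstacle is the weight step in the smooth projective case: transporting Weil's purity to finitely generated base fields of positive transcendence degree requires the spreading-out and specialization, and passing from finiteness for every $\ell$ to vanishing for almost all $\ell$ rests on the uniform $\ell$-integrality of $\det(\Phi - 1)$; the use of alterations rather than a genuine resolution in positive characteristic only contributes bounded kernels and is a secondary technical point.
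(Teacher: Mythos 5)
Your proposal is correct and follows essentially the same route as the paper: reduce via de Jong's alterations and the trace argument to an open subset of a smooth projective compactification, split $H^1$ by the purity/Gysin sequence into $H^1(X_{k^s})$ plus boundary terms $H^0(D_{i,k^s},\QQ_\ell/\ZZ_\ell(-1))$, and bound the Galois invariants of $A[\ell^\infty](-1)$ and $\QQ_\ell/\ZZ_\ell(-1)$ by spreading out and specializing to a finite residue field. The only (immaterial) difference is the endgame over the finite field, where you invoke the Weil bounds and $\ell$-integrality of $\det(\Phi-1)$ while the paper uses the more elementary finiteness of $A(\FF_q)$; the small points you gloss over (shrinking $U$ via semi-purity so the alteration becomes finite flat, and choosing a second specialization point when $\ell$ equals the residue characteristic in characteristic zero) are exactly the ones the paper spells out.
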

\begin{proof}
Firstly, we will show that it suffices to prove the claim for smooth varieties over $k$. Notice that we can replace $k$ by a finite separable extension. Thus, we may assume that $U_{k^s}$ is irreducible. Let $V$ be an open dense subset of $U_{k^s}$. By semi-purity (cf. \cite[\S 8]{Fuj}),
$$H^1(U_{k^s},\ZZ/\ell^n)\lra H^1(V,\ZZ/\ell^n)$$
is injective. We may assume that $V$ is defined over $k$. Therefore, it suffices to prove the claim for an open dense subset. Since $U_{k^s}$ and $U_{\bar{k}}$ have the same underlying toplological space, by shrinking $U_{k^s}$, we may assume that $(U_{\bar{k}})_{\red}$ is irreducible and smooth over $\bar{k}$. So there exists a finite extension $l/k$ such that $(U_{l})_\red$ is irreducible and smooth over $l$. We may assume that $l/k$ is purely inseparable. Then $l^s=l\otimes_{k}k^s$ and $G_l=G_k$. Let $V$ denote $(U_{l})_\red$. Thus $V_{l^s}=V\times_{\Spec k} \Spec k^s$. The $k$-morphism $V\lra U$ induces a $G_k$-equivariant isomorphism
$$ H^1(U_{k^s}, \ZZ/\ell^n)\cong H^1(V_{l^s},\ZZ/\ell^n),$$
so it suffices to prove the claim for $V$. Thus we may assume that $U$ is smooth and geometrically connected over $k$.\\
By above arguments, we can always replace $k$ by a finite extension and shrink $U$. By de Jong's alteration theorem, we may assume that there is a finite flat morphism $f:V\lra U$ such that $V$ admits a smooth projective geometrically connected compactification over $k$. Since the kernel of 
$$H^1(U_{k^s},\ZZ/\ell^n)\lra H^1(V_{k^s},\ZZ/\ell^n)$$
is killed by the degree of $f$ (cf.\cite[Prop. 3.8.4]{CTS2}), it suffices to prove the claim for $V$. Thus we may assume that $U$ is an open subvariety of a smooth projective geometrically connected variety $X$ over $k$. Let $Y_i$ be irreducible components of $X-U$ of codimension $1$. Let $D_i$ be the regular locus of $Y_i$. By extending $k$ to a finite separable extension, we may assume that $D_i$ is geometrically irreducible. By purity, there is a canonical exact sequence

$$ 0\lra H^1(X_{k^s},\ZZ/\ell^n)\lra H^1(U_{k^s},\ZZ/\ell^n) \lra\bigoplus_{i}H^0(D_{i,k^s},\ZZ/\ell^n(-1))
$$
Taking $G_k$-invariants, we get an exact sequence
$$ 0\lra H^1(X_{k^s},\ZZ/\ell^n)^{G_k}\lra H^1(U_{k^s},\ZZ/\ell^n)^{G_k} \lra\bigoplus_{i}H^0(D_{i,k^s},\ZZ/\ell^n(-1))^{G_k}.
$$
It suffices to show that the size of the first and third group are bounded independent of $n$ and is equal to $1$ for all but finitely many $\ell$. 
Since $D_{i,k^s}$ is connected, we have
$$H^0(D_{i,k^s},\ZZ/\ell^n(-1))=\Hom(\mu_{\ell^n},\ZZ/\ell^n).$$
By the lemma below, the size of the third group is bounded independent of $n$ and is equal to $1$ for all but finitely many $\ell$.
Since
$$ H^1(X_{k^s},\ZZ/\ell^n)\cong \Pic_{X/K}[\ell^n](-1)$$
and
$$0\lra\Pic^0_{X/K,\red}[\ell^n]\lra \Pic_{X/K}[\ell^n]\lra \NS(X_{k^s})[\ell^n]$$
is exact, it suffices to show that the size of $(\Pic^0_{X/K,\red}[\ell^n](-1))^{G_k}$ is bounded independent of $n$ and is equal to $1$ for all but finitely many $\ell$.
Let $A$ be the dual of $\Pic^0_{X/K,\red}$. By the Weil pairing, 
$$\Pic^0_{X/K,\red}[\ell^n](-1)\cong \Hom(A[\ell^n],\ZZ/\ell^n).$$
Taking $G_k$-invariants, we get
$$(\Pic^0_{X/K,\red}[\ell^n](-1))^{G_k}\cong \Hom(A[\ell^n],\ZZ/\ell^n)^{G_k}.$$
By the lemma below, the claim follows.
\end{proof}
\begin{lem}
Let $A$ be an abelian variety over a finitely generated field $K$ of characteristic $p\geq 0$. Let $\ell\neq p$ be a prime.  Then the sizes of 

$$\Hom(A[\ell^n],\ZZ/\ell^n)^{G_K} \quad \mathrm{and} \quad  \Hom(\mu_{\ell^n},\ZZ/\ell^n)^{G_K} $$
are bounded independent of $n$. Moreover, for all but finitely many $\ell$, these two groups vanish for any $n$.
\end{lem}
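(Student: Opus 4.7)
I reduce both statements to a uniform Frobenius eigenvalue argument applied to a Tate twist. By the Weil pairing there is a canonical $G_K$-equivariant isomorphism $\Hom(A[\ell^n],\mu_{\ell^n})\cong A^\vee[\ell^n]$, and tensoring with $\Hom(\mu_{\ell^n},\ZZ/\ell^n)\cong\ZZ/\ell^n(-1)$ yields
\[
\Hom(A[\ell^n],\ZZ/\ell^n)\;\cong\;A^\vee[\ell^n](-1)\;\cong\;T_\ell A^\vee(-1)/\ell^n T_\ell A^\vee(-1).
\]
The second group of interest is $\Hom(\mu_{\ell^n},\ZZ/\ell^n)\cong\ZZ/\ell^n(-1)\cong\ZZ_\ell(-1)/\ell^n\ZZ_\ell(-1)$. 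Thus both have the form $(T/\ell^n T)^{G_K}$, where $T$ is a free $\ZZ_\ell$-module of finite rank carrying a continuous $G_K$-action.

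\textbf{Key step.}
Since $K$ is finitely generated over its prime field, $K$ is the function field of a smooth integral scheme $U$ of finite type over $\Spec\ZZ$ (or over $\Spec\FF_p$ in positive characteristic). After shrinking $U$, the abelian variety $A$ extends to an abelian scheme $\mathcal{A}/U$, so the $G_K$-action on $T_\ell A^\vee$ factors through $\pi_1(U,\bar\eta)$. Choose a closed point $v\in U$ whose residue field $\FF_{q_v}$ has characteristic different from $\ell$; such points exist in abundance (for fixed $\ell$), and for the ``cofinite in $\ell$'' statement one may fix a single $v$ once and for all. The geometric Frobenius $\Frob_v\in\pi_1(U)$ (well-defined up to conjugation, using good reduction) acts on $T_\ell A^\vee$ with integral characteristic polynomial $P_v(T)\in\ZZ[T]$ whose roots $\alpha_i$ satisfy $|\alpha_i|=\sqrt{q_v}$ (Weil), and acts on $\ZZ_\ell(-1)$ by $q_v^{-1}$.

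\textbf{Finish.}
For each choice of $T$, set $V:=T\otimes_{\ZZ_\ell}\QQ_\ell$. Then $\Frob_v-1$ acts on $V$ with nonzero determinant: in the $\mu$ case it is $q_v^{-1}-1=(1-q_v)/q_v$, and in the abelian variety case it is $\prod_i(\alpha_i/q_v-1)=\pm P_v(q_v)/q_v^{2g}$. Both are nonzero because $q_v\geq 2$ forces $q_v\neq 1$ and $|\alpha_i|=\sqrt{q_v}<q_v$. Since $(T/\ell^n T)^{G_K}\subseteq\ker\bigl((\Frob_v-1)\bmod\ell^n\bigr)$, applying Smith normal form to $\Frob_v-1$ as a $\ZZ_\ell$-endomorphism of $T$ bounds this kernel by $\ell^{\,v_\ell(\det(\Frob_v-1\mid V))}$, a constant independent of $n$. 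Because $\ell\neq p$ and $\ell\neq\Char(\FF_{q_v})$, the factor $q_v$ is an $\ell$-adic unit, so the bound equals $1$ precisely when $\ell$ does not divide the integer $q_v-1$ (resp.\ $P_v(q_v)$). Only finitely many $\ell$ fail this condition, and together with the residue characteristic of $v$ this remains a finite exceptional set.

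\textbf{Main obstacle.}
The substantive input is the Weil estimate $|\alpha_i|=\sqrt{q_v}$ for Frobenius eigenvalues on $T_\ell$ of an abelian variety over a finite field; everything else is a standard spreading-out argument plus a Smith normal form computation. I deliberately avoid any deep Galois representation theory (no Tate conjecture for endomorphisms, no Serre open image theorem), relying only on the elementary fact that a Weil $q_v$-number of weight one equals neither $q_v$ nor $1$.
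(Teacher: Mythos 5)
Your argument is correct. Its skeleton --- spread $A$ out to an abelian scheme $\mathcal{A}$ over an integral scheme $U$ of finite type over the prime ring and specialize Galois invariants to Frobenius invariants at a closed point $v$ of residue characteristic prime to $\ell$ --- is exactly the paper's specialization step, phrased there as the injection $H^0(U,\mathscr{F})\hookrightarrow H^0(v,\mathscr{F})$ for the locally constant sheaf $\mathscr{F}=\mathscr{H}\!om(\mathcal{A}[\ell^n],\ZZ/\ell^n)$. Where you genuinely diverge is the finite-field endgame. The paper keeps the untwisted module and uses $\Hom(A[\ell^n],\ZZ/\ell^n)^{G_{\FF_q}}=\Hom(A[\ell^n]_{G_{\FF_q}},\ZZ/\ell^n)$, whose order equals $|A[\ell^n]^{G_{\FF_q}}|=|A(\FF_q)[\ell^n]|$ because invariants and coinvariants of a finite module under the procyclic group $\hat{\ZZ}$ have the same size; everything is then controlled by the trivially finite group $A(\FF_q)$, and the vanishing for almost all $\ell$ is just the finiteness of its set of torsion primes. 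You instead dualize via the Weil pairing to get $A^\vee[\ell^n](-1)$ and bound $\ker(\Frob_v-1 \bmod \ell^n)$ by $\ell^{\,v_\ell(\det(\Frob_v-1))}$ via elementary divisors, with the nonvanishing of that determinant supplied by the Riemann hypothesis for abelian varieties over finite fields (no eigenvalue of the twisted Frobenius has absolute value $1$). The two routes are equivalent in substance: the paper's is more elementary, needing only $|A(\FF_q)|<\infty$ rather than the Weil eigenvalue estimates, while yours makes the bound quantitative and exhibits the exceptional set of primes explicitly as the divisors of the single nonzero integer $P_{v_0}(q_{v_0})(q_{v_0}-1)$, which also handles the $\mu_{\ell^n}$ case uniformly rather than ``by the same argument.''
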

\begin{proof}
We will only prove the claim for the first group, since the second one follows from the same arguments. Firstly, we assume that $K$ is finite, then we will use a specialization technique to reduce the general case to the finite field case.

If $K$ is finite, then we have
$$\Hom(A[\ell^n],\ZZ/\ell^n)^{G_K}=\Hom(A[\ell^n]_{G_K},\ZZ/\ell^n),$$
which has the same size as $A[\ell^n]^{G_{K}}$. Then the claim follows from the finiteness of $A(K)$. 

In general, choose an integral regular scheme $S$ of finite type over $\Spec \ZZ$ with function field $K$. By Shrinking $S$, we may assume $A/K$ extends to an abelian scheme $\SA/S$. Fix a closed point $s\in S$. For any $\ell\neq \Char(k(s))$, we can shrink $S$ such that $\ell$ is invertible on $S$ and $s\in S$. Then the \'etale sheaf $\SA[\ell^n]$ is a locally constant sheaf of $\ZZ/\ell^n$-module since $\SA[\ell^n]$ is finite \'etale over $S$. Thus
$$\SH om(\SA[\ell^n],\ZZ/\ell^n)$$
is also a locally constant sheaf of $\ZZ/\ell^n$-module and its stalk at the generic point can be identified with
$$ \Hom(A[\ell^n],\ZZ/\ell^n).$$
Set $\SF=\SH om(\SA[\ell^n],\ZZ/\ell^n)$, we have
$$\Hom(A[\ell^n],\ZZ/\ell^n)^{G_K}=H^0(S,\SF)\hookrightarrow H^0(s,\SF).
$$
Since $H^0(s,\SF)=\Hom(\SA_{s}[\ell^n],\ZZ/\ell^n)^{G_{k(s)}}$ and $k(s)$ is finite, thus the claim holds for $\ell\neq \Char(k(s))$. In the case $\Char(K)=0$, we can choose another closed point $s^\prime$ such that $\Char(k(s^\prime))\neq \Char(k(s))$, then by the same argument, the claim also holds for $\ell=\Char(k(s))$. This completes the proof.
\end{proof}

\subsection{The Tate-Shafarevich group}
In this section, first, we will study a geometric version of the Tate-Shafarevich group for an abelian variety over a function field with a base field $k$. Then, we will prove that the Galois fixed part of the geometric Tate-Shafarevich group is canonically isomorphic to Tate-Shafarevich group up to finite groups in the case that $k$ is a finite field. The idea is reducing the question to the relation between arithmetic Brauer groups and geometric Brauer groups which was studied in \cite{CTS1} (cf. \cite{Yua2}).
\begin{prop}\label{geosha}
Let $C$ be a smooth projective geometrically connected curve defined over a finitely generated field $k$ of characteristic $p\geq 0$. Let $K$ be the function field of $C$ and $A$ be an abelian variety over $K$. Denote $Kk^s$ by $K^\prime$.  Let $U\subseteq C$ be an open dense subscheme. Define 
$$\Sha_{U_{k^s}}(A):=\Ker(H^1(K^\prime,A)\lra \prod_{v\in |U_{k^s}|} H^1(K_v^{sh}, A))
$$
Then the natural map 
$$ 
\Sha_{C_{k^s}}(A)^{G_k}(\non p)\longrightarrow \Sha_{U_{k^s}}(A)^{G_{k}}(\non p)
$$
is injective and has a cokernel of finite exponent.
\end{prop}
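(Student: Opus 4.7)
The injectivity is clear from the definitions, as $\Sha_{C_{k^s}}(A)$ is the subgroup of $\Sha_{U_{k^s}}(A)$ cut out by additional local vanishing at the finitely many places in $(C \setminus U)_{k^s}$; taking $G_k$-invariants preserves the inclusion.

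For the cokernel, I would start from the $G_k$-equivariant tautological exact sequence
$$0 \longrightarrow \Sha_{C_{k^s}}(A) \longrightarrow \Sha_{U_{k^s}}(A) \longrightarrow \bigoplus_{v \in (C\setminus U)_{k^s}} H^1(K_v^{sh}, A),$$
in which $G_k$ permutes the places according to their image in $C\setminus U$. Passing to $G_k$-invariants, the cokernel of $\Sha_{C_{k^s}}(A)^{G_k} \to \Sha_{U_{k^s}}(A)^{G_k}$ embeds into the $G_k$-invariants of the rightmost group. The places of $C_{k^s}$ above a given closed point $w \in C \setminus U$ form a single $G_k$-orbit with stabilizer $G_{k(w)}$, so Shapiro's lemma identifies the $G_k$-invariants of the corresponding summand with $H^1(K_w^{sh}, A)^{G_{k(w)}}$. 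This produces an embedding
$$\Sha_{U_{k^s}}(A)^{G_k}/\Sha_{C_{k^s}}(A)^{G_k} \hookrightarrow \bigoplus_{w \in C \setminus U} H^1(K_w^{sh}, A)^{G_{k(w)}}.$$
Since $C \setminus U$ is a finite set of closed points, it suffices to prove that $H^1(K_w^{sh}, A)^{G_{k(w)}}(\non p)$ has finite exponent for each such $w$.

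The hard part is this local bound, which I would deduce from N\'eron model theory. Let $\mathcal{A}$ be the N\'eron model of $A$ over $\mathcal{O}_{C,w}^{sh}$; smoothness of $\mathcal{A}$ together with the separable closure of the residue field yield $H^i(\mathcal{O}_{C,w}^{sh}, \mathcal{A}) = 0$ for $i \geq 1$, so the Leray spectral sequence for $j : \Spec K_w^{sh} \to \Spec \mathcal{O}_{C,w}^{sh}$ identifies $H^1(K_w^{sh}, A)$ with the stalk of $R^1 j_* \mathcal{A}$ at $w$. For $\ell$ invertible in the residue field, this stalk is a divisible $\ell$-primary group of finite corank, whose $G_{k(w)}$-module structure sits as a subquotient of $V_\ell A \otimes \QQ_\ell / \ZZ_\ell$. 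Since $k(w)$ is finitely generated, spreading $A$ over an integral model of $k(w)$ and specializing at a closed point with finite residue field $\kappa$ reduces the problem to Weil's theorem: Frobenius at $\kappa$ acts on $V_\ell A$ with eigenvalues of absolute value $|\kappa|^{-1/2}$, hence never $1$. Consequently the $G_{k(w)}$-invariants of any subquotient of $V_\ell A \otimes \QQ_\ell / \ZZ_\ell$ are finite, with $\ell$-adic order bounded by the $\ell$-adic valuation of $\det(\Frob_\kappa - 1 \,|\, V_\ell A)$. Since this determinant is a fixed nonzero algebraic integer, its $\ell$-adic valuation is uniformly bounded in $\ell$, yielding the desired finite exponent and completing the plan.
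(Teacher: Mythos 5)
Your global strategy is correct and is genuinely different from the paper's: you compare the two groups directly through the tautological localization sequence at the finitely many removed closed points and reduce everything to a purely local statement, whereas the paper reduces to $A=\Pic^0_{X/K}$ for a curve $X$, identifies $\Sha_{U_{k^s}}(A)(\non p)$ with the geometric Brauer group of a regular model over $U$, and then invokes the birational invariance of $\Br(\,\cdot_{k^s})^{G_k}(\non p)$ up to finite groups (Proposition \ref{purity}). The first half of your argument --- injectivity, the embedding of the cokernel into $\bigoplus_{w\in C\setminus U}H^1(K_w^{sh},A)^{G_{k(w)}}$ via the orbit decomposition --- is fine, and the reduction to showing that each $H^1(K_w^{sh},A)^{G_{k(w)}}(\non p)$ has finite exponent is legitimate.

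The gap is in that local bound. The identification of $H^1(K_w^{sh},A)$ with the stalk of $R^1j_*A$ over the strictly henselian base is a tautology and computes nothing, so everything rests on your structural claim, which is wrong at places of bad reduction --- and $C\setminus U$ will in general contain exactly those. What is true is that $H^1(K_w^{sh},A)(\ell)$ is a quotient of $H^1(I_w,A[\ell^\infty])\cong (A[\ell^\infty]^{P_w})_{I_w}(-1)$; note the Tate twist coming from $I_w^{\mathrm{tame}}\cong\widehat{\ZZ}^{(p')}(1)$, so the module is a subquotient of $(V_\ell A/T_\ell A)(-1)$, not of $V_\ell A\otimes\QQ_\ell/\ZZ_\ell$. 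More seriously, ``subquotient plus Weil bounds'' does not close: $V_\ell A$ is pure of weight $-1$ only at good places, and at a bad place its inertia coinvariants are mixed of weights $\{-2,-1,0\}$, hence of weights $\{0,1,2\}$ after the $(-1)$ twist. A weight-$0$ subquotient really occurs in $(V_\ell A)(-1)$ --- for a Tate elliptic curve $E$ one has $\QQ_\ell(1)(-1)=\QQ_\ell$ with trivial Frobenius action sitting inside $(V_\ell E)(-1)$ --- and its image in the $\QQ_\ell/\ZZ_\ell$-version has infinite invariants. So ``never~$1$'' fails for general subquotients, and you must identify \emph{which} subquotient actually computes $H^1(K_w^{sh},A)(\ell)$ (the N\'eron model / SGA~7 monodromy analysis shows the weight-$0$ part does not contribute; alternatively, for $k(w)$ finite one can bypass this by Tate local duality, since $H^1(K_w^{sh},A)^{G_{k(w)}}(\non p)$ is a quotient of $H^1(K_w^{h},A)(\non p)\cong \bigl(A^\vee(\widehat{K}_w)^{\ast}\bigr)(\non p)$, which is finite). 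Finally, for $k(w)$ finitely generated but not finite, the specialization to a closed point with finite residue field requires spreading out the entire local situation, not just $A$; this is the same technique the paper uses for $\Hom(A[\ell^n],\ZZ/\ell^n)^{G_K}$, but it is asserted rather than carried out here. The statement you want is true, but the decisive step is not proved as written.
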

\begin{proof}
By definitions, the injectivity is obvious. It suffices to show that the cokernel is of finite exponent. Since there exists an abelian variety $B/K$ such that $A\times B$ is isogenous to $\Pic^0_{X/K}$ for some smooth projective geometrically connected curve $X$ over $K$ (cf. \cite[p10]{CTS1}), it suffices to prove the claim for $A=\Pic^0_{X/K}$. Without loss of generality, we may replace $k$ by a finite extension $l/k$. This is obvious if $l/k$ is separable. For $l/k$ a purely inseparable extension of degree $p^n$, the quotient $A(K^sl^s)/A(K^s)$ is killed by $p^n$. Then $H^1(K^\prime,A) \lra H^1(Kl^s,A)$ has a kernel and a cokernel killed by $p^n$. It follows that $\Sha_{U_{k^s}}(A)\lra \Sha_{U_{l^s}}(A)$ has a kernel and a cokernel killed by some power of $p$. By resolution of singularity of surfaces (cf. \cite{Lip}), $X\lra \Spec K$ admits a proper regular model $\pi:\CX\longrightarrow C$. By extending $k$, we may assume that $\CX$ is smooth proper geometrically connected over $k$. Write $V$ for $\pi^{-1}(U)$. By the Leray spectral sequence
$$E^{p,q}_2=H^p(U_{k^s},R^q\pi_*\GG_m)\Rightarrow H^{p+q}(V_{k^s},\GG_m),
$$
we get a long exact sequence\\
$$H^2(U_{k^s},\mathbb{G}_m)\longrightarrow \Ker(H^2(V_{k^s},\mathbb{G}_m)
\longrightarrow H^0(U_{k^s},R^2\pi_*\mathbb{G}_m))$$
$$\longrightarrow H^1(U_{k^s},R^1\pi_*\mathbb{G}_m)\longrightarrow H^3(U_{k^s},\mathbb{G}_m).
$$
By \cite[Cor. 3.2 and Lem. 3.2.1]{Gro3}, $R^2\pi_*\GG_m=0$ and $H^i(U_{k^s},\GG_m)(\non p)=0$ for $i\geq 2$. Thus, we have a canonical isomoprhism
$$ H^2(V_{k^s},\GG_m)(\non p)\cong H^1(U_{k^s},R^1\pi_*\GG_m)(\non p).$$
Let $j:\Spec K^\prime \lra C_{k^s}$ be the generic point.
By the spectral sequence
$$H^p(U_{k^s},R^qj_*(j^*R^1\pi_*\GG_m))\Rightarrow H^{p+q}(K^\prime, \Pic(X_{K^s})),$$
we have
$$H^1(U_{k^s},j_*j^*R^1\pi_*\GG_m)=\Ker(H^1(K^\prime,\Pic(X_{K^s}))\lra \prod_{v\in |U_{k^s}|} H^1(K_v^{sh}, \Pic(X_{K^s}))).$$
Let $D$ be an effective divisor on $X$ with $\deg(D)>0$. Since the quotient $\Pic(X_{K^s})/(\Pic^0_{X/K}(K^s)\oplus \ZZ[D])$ is finite, the natural map induced by the inclusion $\Pic^0_{X/K}(K^s)\lra \Pic(X_{K^s})$
$$\Sha_{U_{k^s}}(\Pic^0_{X/K}) \lra H^1(U_{k^s},j_*j^*R^1\pi_*\GG_m)$$
has a kernel and a cokernel of finite exponent. Thus, the question is reduced to show that the natural map
$$ (H^1(C_{k^s},j_*j^*R^1\pi_*\GG_m)(\non p))^{G_k} \lra (H^1(U_{k^s},j_*j^*R^1\pi_*\GG_m)(\non p))^{G_k}$$
has a cokernel of finite exponent.
Without loss of generality, we may shrink $U$ such that $\pi$ is smooth on $\pi^{-1}(U)$. Then, by Lemma \ref{picard}, the natural map
$$R^1\pi_*\GG_m\lra j_*j^*R^1\pi_*\GG_m$$
is an isomorphism on $U$. It follows that
$$H^2(V_{k^s},\GG_m)(\non p)\cong H^1(U_{k^s},R^1\pi_*\GG_m)(\non p)$$
$$\cong H^1(U_{k^s},j_*j^*R^1\pi_*\GG_m)(\non p).
$$
Consider the following commutative diagram\\
\begin{displaymath}
\xymatrix{ \Br(\CX_{k^s})(\non p)\ar[r] \ar[d] &H^1(C_{k^s},j_*j^*R^1\pi_*\GG_m)(\non p)\ar[d]\\
	\Br(V_{k^s})(\non p)\ar[r]  &H^1(U_{k^s},j_*j^*R^1\pi_*\GG_m)(\non p) .}
\end{displaymath}
Since the map on the bottom is an isomorphism, it suffices to show that
the natural map
$$ \Br(\CX_{k^s})^{G_k}(\non p)\lra \Br(V_{k^s})^{G_k}(\non p) $$
has a finite cokernel. This follows from Proposition \ref{purity}.
\end{proof}

\begin{prop}\label{agsha}
Notations as in the above proposition. Assuming that $k$ is a finite field, define 
$$\Sha_{K^\prime}(A):=\Ker(H^1(K^\prime,A)\lra \prod_{v\in |C_{k^s}|} H^1(K_v^{sh}, A)).
$$
Then the natural map 
$$\Sha(A)(\non p)\longrightarrow \Sha_{K^\prime}(A)^{G_k}(\non p)
$$
has a kernel and a cokernel of finite exponent.
\end{prop}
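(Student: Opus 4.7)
The plan is to reduce the statement to the relationship between arithmetic and geometric Brauer groups of a suitable proper model of a curve, in the spirit of the proof of Proposition \ref{geosha}. By Poincar\'e's reducibility theorem, any abelian variety $A/K$ is isogenous to a direct factor of the Jacobian of some smooth projective geometrically connected curve $X/K$, and an isogeny of degree $d$ induces maps on the corresponding $\Sha$ and $\Sha_{K'}$ groups with kernels and cokernels killed by $d$. Hence it will suffice to treat the case $A=\Pic^0_{X/K,\red}$. By Lipman's resolution of singularities for arithmetic surfaces, $X$ admits a proper regular model $\pi:\CX\lra C$; since $k$ is perfect, regular implies smooth over $k$, and $\CX$ is automatically projective as a proper surface, while $\pi_*\CO_\CX=\CO_C$ together with the geometric connectedness of $C$ force $\CX_{k^s}$ to be connected. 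So $\CX$ is smooth projective geometrically connected over $k$.

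The proof will then chain together three almost isomorphisms in the non-$p$ part. First, the classical Artin--Grothendieck theorem recalled in the introduction gives
\[
\Sha(\Pic^0_{X/K,\red})(\non p) \;\approx\; \Br(\CX)(\non p).
\]
Second, \cite[Cor.~1.4]{Yua2} shows that $\Br(\CX)\lra \Br(\CX_{k^s})^{G_k}$ has finite kernel and cokernel. Third, I will establish a canonical $G_k$-equivariant almost isomorphism
\[
\Br(\CX_{k^s})(\non p) \;\approx\; \Sha_{K'}(\Pic^0_{X/K,\red})(\non p)
\]
by adapting the argument of Proposition \ref{geosha} to the case $U=C$: the Leray spectral sequence for $\pi$, together with Artin's vanishing $R^2\pi_*\GG_m=0$, Tsen's theorem, and the cohomological dimension of $C_{k^s}$, yields $\Br(\CX_{k^s})(\non p)\cong H^1(C_{k^s},R^1\pi_*\GG_m)(\non p)$; and the Leray spectral sequence for $j:\Spec K'\hookrightarrow C_{k^s}$ together with the $G_K$-equivariant splitting $\Pic(X_{K^s})=\Pic^0_{X/K,\red}(K^s)\oplus\ZZ$ identifies $H^1(C_{k^s},j_*j^*R^1\pi_*\GG_m)(\non p)$ with $\Sha_{K'}(\Pic^0_{X/K,\red})(\non p)$. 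Taking $G_k$-invariants in the resulting isomorphism and chaining the three steps will produce the required map with finite-exponent kernel and cokernel, and its identification with the natural restriction map will follow from compatibility of the spectral sequences used at each stage.

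The main technical obstacle will be the comparison between $R^1\pi_*\GG_m$ and $j_*j^*R^1\pi_*\GG_m$ on $C_{k^s}$ at points where $\pi$ fails to be smooth. The kernel and cokernel of the natural map between these sheaves are skyscrapers supported on the finitely many bad fibers; their stalks are governed by component groups and by Picard groups of reducible special fibers, hence are finitely generated. Because the residue fields of closed points of $C_{k^s}$ are separably closed (as $k$ is finite, hence perfect), these skyscraper sheaves have vanishing $H^1$ on $C_{k^s}$, so the induced map on $H^1(C_{k^s},-)$ is an isomorphism up to finite groups. Combined with the finite-exponent perturbations coming from taking $G_k$-invariants (where $H^i(G_k,-)$ on torsion modules is of finite exponent since $G_k\cong\hat{\ZZ}$) and from possibly passing to a finite separable extension of $k$, this will yield the required almost isomorphism between $\Sha(A)(\non p)$ and $\Sha_{K'}(A)^{G_k}(\non p)$.
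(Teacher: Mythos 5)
Your proposal follows essentially the same route as the paper's own proof: reduce to $A=\Pic^0_{X/K,\red}$ for a curve $X/K$ via Poincar\'e reducibility, pass to a regular projective model $\CX\to C$, identify $\Sha(\Pic^0_{X/K,\red})$ with $\Br(\CX)$ (Artin--Grothendieck) and $\Sha_{K'}(\Pic^0_{X/K,\red})$ with $\Br(\CX_{k^s})$ (the Leray argument of Proposition~\ref{geosha}), and conclude with \cite[Cor.~1.4]{Yua2}. The extra care you take with the skyscraper contributions at the bad fibers (and the finiteness of the resulting error terms as finitely generated subgroups of torsion groups) is sound and only fills in details the paper leaves implicit.
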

\begin{proof}
We use the same arguments as in the proof of the previous lemma. It suffices to prove the claim for $A=\Pic^0_{Y/K}$ where $Y$ is a smooth projective geometrically connected curve over $k$. $Y$ admits a projective regular model $ \CY\longrightarrow C$. Then we have
$$ \Br(\mathcal{Y}) \cong \Sha(\Pic_{Y/K}^0) $$ 
and 
$$ \Br(\mathcal{Y}_{k^s}) \cong  \Sha_{K^\prime}(\Pic_{Y/K}^0) $$
up to finite groups. Thus the question is reduced to show that
$$
\Br(\mathcal{Y})\longrightarrow \Br(\mathcal{Y}_{k^s})^{G_k}
$$ 
has a finite kernel and a finite cokernel. This follows from \cite[Cor. 1.4]{Yua2}.
\end{proof}
\subsection{Cofiniteness of Brauer groups and Tate-Shafarevich groups}
Let $\ell$ be a prime number. Recall that a $\ell$-torsion abelian group $M$ is of \emph{cofinite type} if $M[\ell]$ is finite. This is also equivalent to that $M$ can be written as $(\QQ_\ell/\ZZ_\ell)^{r}\oplus M_0$ for some integer $r\geq 0$ and some finite group $M_0$. We also say that a torsion abelian group $M$ is of cofinite type if $M(\ell)$ is of cofinite type for all primes $\ell$. It is easy to see that a morphism $M\rightarrow N$ between abelian groups of cofinite type has a kernel and a cokernel of finite exponent if and only if the kernel and cokernel are finite.

In the following, we will show the cofinitness of geometric Tate-Shafarevich groups and geometric Brauer groups defined in previous sections.

\begin{lem}\label{cofsha}
The group $\Sha_{U_{k^s}}(A)(\ell)$ defined in the Proposition \ref{geosha} is of cofinite type for any $\ell\neq p$.
\end{lem}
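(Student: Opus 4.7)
The plan is to combine the geometric identification of the Tate--Shafarevich group given in Proposition \ref{geosha} with a Kummer-theoretic finiteness.

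First, by the same reduction as at the beginning of the proof of Proposition \ref{geosha}, it suffices to treat the case $A = \Pic^{0}_{X/K}$ for some smooth projective geometrically connected curve $X$ over $K$. Concretely, choose $B/K$ and $X$ so that $A \times B$ is isogenous to $\Pic^{0}_{X/K}$; the isogeny and its dual provide maps between the corresponding Sha groups whose compositions are multiplication by the degree, so the kernel and cokernel on $\ell$-primary parts are of finite exponent. Since $K' = K k^s$ is countable, each $\Sha_{U_{k^s}}(-)(\ell)$ is a countable $\ell$-torsion abelian group, and cofiniteness of such a group is equivalent to finiteness of its $\ell$-torsion; this latter property is preserved under morphisms with kernels and cokernels of finite exponent, so cofiniteness transfers between $\Sha_{U_{k^s}}(A)(\ell)$ and $\Sha_{U_{k^s}}(\Pic^{0}_{X/K})(\ell)$.

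Second, for $A = \Pic^{0}_{X/K}$, take a proper regular model $\pi \colon \CX \lra C$ of $X$ and, if necessary, shrink $U$ so that $\pi$ is smooth on $V := \pi^{-1}(U)$. The proof of Proposition \ref{geosha} yields the canonical isomorphism
$$\Sha_{U_{k^s}}(\Pic^{0}_{X/K})(\non p) \cong \Br(V_{k^s})(\non p),$$
so it suffices to show that $\Br(V_{k^s})[\ell]$ is finite. The Kummer exact sequence on $V_{k^s}$ (valid since $\ell \neq p$ is invertible there) provides a surjection
$$H^{2}(V_{k^s}, \mu_{\ell}) \twoheadrightarrow \Br(V_{k^s})[\ell],$$
and $H^{2}(V_{k^s}, \mu_{\ell})$ is finite by the standard finiteness of \'etale cohomology with finite coefficients on a variety over a separably closed field.

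The main technical issue is the isogeny reduction to Jacobians in the case when $\ell$ divides the isogeny degree: the induced maps on $\Sha_{U_{k^s}}(-)(\ell)$ are then no longer isomorphisms, and one must leverage both the isogeny and its dual, combined with the countability of $\Sha_{U_{k^s}}(-)$, to transfer cofinite type. Once this reduction is in hand, the remaining steps are routine given Proposition \ref{geosha} and the classical finiteness of \'etale cohomology with torsion coefficients.
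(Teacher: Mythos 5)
There is a genuine gap in your first reduction step. The transfer principle you invoke --- that finiteness of the $\ell$-torsion subgroup passes across a morphism whose kernel and cokernel have finite exponent --- is false: the zero map $\bigoplus_{\mathbb{N}}\ZZ/\ell\lra 0$ has kernel of exponent $\ell$ and trivial cokernel, yet its source has infinite $\ell$-torsion while its target is of cofinite type. Concretely, writing $u\colon \Sha_{U_{k^s}}(A\times B)\lra\Sha_{U_{k^s}}(\Pic^0_{X/K})$ for the map induced by the isogeny $\phi$ and $v$ for the map induced by its quasi-inverse, the relations $v\circ u=n$ and $u\circ v=n$ only tell you that $\Ker(u)$ is killed by $n$; when $\ell\mid n$ this is perfectly compatible with $\Ker(u)\cap\Sha_{U_{k^s}}(A\times B)[\ell]$ being infinite, so cofiniteness of the target says nothing about the source. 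Countability of $K'$ does not help here. To close the gap you would have to prove directly that $\Ker(u)(\ell)$ is finite, e.g.\ by realizing it as a subquotient of $H^1(U_{k^s},\SG)$ for a spreading-out $\SG$ over $U$ of the finite group scheme $\Ker(\phi)$ and appealing to finiteness of \'etale cohomology of constructible sheaves on a curve over a separably closed field; that is exactly the cohomological input your write-up flags as ``the main technical issue'' but never supplies. Your second and third steps are fine: the identification $\Sha_{U_{k^s}}(\Pic^0_{X/K})(\non p)\cong H^2(V_{k^s},\GG_m)(\non p)$ is extracted correctly from the proof of Proposition \ref{geosha}, and the Kummer-sequence finiteness is Lemma \ref{cofbr}. (Shrinking $U$ is harmless because it only enlarges $\Sha_{U_{k^s}}$, and a subgroup of a group with finite $\ell$-torsion again has finite $\ell$-torsion; it would be worth saying this explicitly.)

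The paper sidesteps the isogeny reduction entirely, and you should too. After shrinking $U$ so that $A$ extends to an abelian scheme $\SA$ over $U$, one has $\SA\cong j_*A$ because $\SA$ is the N\'eron model, whence $H^1(U_{k^s},\SA)\cong\Sha_{U_{k^s}}(A)$ by the Leray spectral sequence for $j$; the multiplication-by-$\ell$ sequence $0\lra\SA[\ell]\lra\SA\lra\SA\lra 0$ then gives a surjection from the finite group $H^1(U_{k^s},\SA[\ell])$ onto $H^1(U_{k^s},\SA)[\ell]$. This uses the same finiteness of \'etale cohomology with finite coefficients that you use at the end, but applies it to $\SA[\ell]$ directly, for an arbitrary abelian variety $A$, with no detour through Jacobians or Brauer groups of regular models. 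Either adopt this route or supply the missing finiteness of $\Ker(u)(\ell)$.
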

\begin{proof}
We may shrink $U$ such that $A$ extends to an abelian scheme $\SA$ over $U$. By the exact sequence
$$0\lra\SA[\ell]\lra\SA\stackrel{\ell}{\lra}\SA\lra 0$$
we get a surjection
$$H^1(U_{k^s},\SA[\ell])\lra H^1(U_{k^s}, \SA)[\ell].$$
Since $H^1(U_{k^s},\SA[\ell])$ is finite, $H^1(U_{k^s}, \SA)[\ell]$ is also finite. Thus, it suffices to show
$$H^1(U_{k^s}, \SA)\cong\Sha_{U_{k^s}}(A). $$
This follows from  $\SA\cong j_*A$, since $\SA$ is a N\'eron model of $A$ over $U$, where $j:\Spec K\lra U$ is the generic point.

\end{proof}
\begin{lem}\label{cofbr}
Let $X$ be a smooth variety over a separable closed field $k$ of characteristic $p\geq 0$. Let $\ell\neq p$ be a prime. Then $\Br(X)(\ell)$
is of cofinite type.
\end{lem}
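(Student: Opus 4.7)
The plan is to reduce the cofiniteness of $\Br(X)(\ell)$ to the finiteness of a single étale cohomology group via the Kummer sequence. By definition of cofinite type, it suffices to prove that $\Br(X)[\ell]$ is finite.

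The Kummer sequence $1 \to \mu_\ell \to \GG_m \xrightarrow{\ell} \GG_m \to 1$ on $X_{\et}$ yields the exact sequence
$$0 \to \Pic(X)/\ell \to H^2(X,\mu_\ell) \to \Br(X)[\ell] \to 0,$$
so $\Br(X)[\ell]$ is a quotient of $H^2(X,\mu_\ell)$, and it suffices to prove that $H^2(X,\mu_\ell)$ is finite.

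Since $X$ is separated and of finite type over the separably closed field $k$, and $\mu_\ell$ is a constructible torsion sheaf of order invertible on $X$ (using $\ell \neq p$), the standard finiteness theorem for étale cohomology of constructible torsion sheaves over a separably closed base (SGA 4$\tfrac12$, Théorèmes de finitude) gives that $H^i(X,\mu_\ell)$ is finite for every $i \geq 0$. Specialising to $i = 2$ concludes the proof.

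The only mild subtlety is that $X$ is not assumed to be proper, so the classical finiteness for proper morphisms does not apply directly; Deligne's theorem handles the open case via compactification and alterations. A fully self-contained argument in the style of the earlier proofs in the paper could instead use de Jong's alteration to embed a generically finite cover of $X$ as an open subscheme of a smooth projective variety, together with the absolute cohomological purity theorem to propagate finiteness from the compactification to the open part; the remaining kernel in $H^2(X,\mu_\ell) \to H^2(X',\mu_\ell)$ is killed by the degree of the alteration by the argument of Lemma \ref{finiteker}, which is enough since the target is finite. I do not expect either route to present a serious obstacle.
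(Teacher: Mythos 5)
Your proof is correct and follows essentially the same route as the paper: the Kummer sequence gives a surjection $H^2(X,\mu_\ell)\twoheadrightarrow \Br(X)[\ell]$, and the claim reduces to the finiteness of $H^2(X,\mu_\ell)$, which the paper simply asserts and you justify via Deligne's finiteness theorem for constructible sheaves. The extra discussion of the non-proper case is a reasonable elaboration but not a different argument.
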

\begin{proof}
The Kummer exact sequence 
$$0\lra \mu_\ell \lra \GG_m\stackrel{\ell}{\lra} \GG_m \lra 0$$
induces a surjection
$$H^2(X,\mu_\ell)\lra \Br(X)[\ell].$$
Then the claim follows from the finiteness of $H^2(X,\mu_\ell)$.

\end{proof}
\section{Proof of the main theorem }
\subsection{The left exactness in Theorem \ref{thm1.2}}
In this section, we will prove the left exactness of the sequence in Theorem \ref{thm1.2} by following Grothendieck's arguments in \cite[\S 4]{Gro3}.
\begin{lem}\label{picard}
Let $U$ be an irreducible regular scheme of dimension $1$ with function field $K$. Let $\pi:\CX\lra U$ be a smooth proper morphism with a generic fiber geometrically connected over $K$. Let $j:\Spec K\lra U$ be the generic point of $U$. Then we have
\begin{itemize}
\item[(a)]
the natural map
$$R^1\pi_*\GG_m\lra j_*j^*R^1\pi_*\GG_m$$
is an isomorphism,
\item[(b)]
the natural map
$$R^2\pi_*\GG_m(\ell)\lra j_*j^*R^2\pi_*\GG_m(\ell)$$
is an isomorphism for any prime $\ell$ invertible on $U$.
\end{itemize}
\end{lem}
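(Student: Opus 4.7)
The strategy for both parts is to verify the map is an isomorphism at each geometric stalk of $U$. At the geometric generic point the unit of adjunction is tautologically an isomorphism, so the task reduces to closed points. Fix a closed point $v \in U$ with geometric point $\bar v$, let $S = \Spec \CO_{U,\bar v}^{sh}$ with special point $s$ and generic point $\eta = \Spec K_v^{sh}$, and set $\CX_S = \CX \times_U S$, which is smooth proper over $S$. By continuity of \'etale cohomology the stalks read
\[
(R^i\pi_*\GG_m)_{\bar v} = H^i(\CX_S,\GG_m), \qquad (j_*j^*R^i\pi_*\GG_m)_{\bar v} = H^i(\CX_\eta,\GG_m),
\]
and in each case the induced map on stalks is the restriction from $\CX_S$ to its generic fiber.

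For part (a), the plan is to show that $\Pic(\CX_S) \to \Pic(\CX_\eta)$ is an isomorphism. Injectivity comes down to observing that the kernel is generated by line bundles supported on the closed fiber $\CX_s$; but $\CX_s$ is the pullback of the closed point of $S$, hence is a principal Cartier divisor defined by a uniformizer of $\CO_{U,\bar v}^{sh}$, so it is zero in $\Pic(\CX_S)$ along with all its integer multiples. For surjectivity, $\CX_S$ is regular (smooth over a regular base), so Weil and Cartier divisors coincide, and every divisor on $\CX_\eta$ extends by schematic closure to a Weil divisor on $\CX_S$.

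For part (b), the plan is to reduce to the Kummer short exact sequence
\[
0 \to R^1\pi_*\GG_m/\ell^n \to R^2\pi_*\mu_{\ell^n} \to R^2\pi_*\GG_m[\ell^n] \to 0
\]
on $U$ and its analogue on $\Spec K$. Because $\ell$ is invertible on $U$ and $\pi$ is smooth and proper, smooth-and-proper base change forces $R^i\pi_*\mu_{\ell^n}$ to be a locally constant constructible sheaf with trivial inertia action at every closed point (good reduction); consequently $R^i\pi_*\mu_{\ell^n} \cong j_*j^*R^i\pi_*\mu_{\ell^n}$. Combined with part (a), which matches the $R^1\pi_*\GG_m$ terms under $j_*j^*$, a diagram chase (five-lemma) on the two Kummer sequences, followed by passage to the colimit over $n$, should deliver the desired identification of $R^2\pi_*\GG_m(\ell)$ with $j_*j^*R^2\pi_*\GG_m(\ell)$.

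The most delicate step will be commuting the mod-$\ell^n$ quotient and the filtered colimit with the pushforward $j_*$; equivalently, one must verify vanishing (or finite generation/control) of the first derived pushforwards $R^1j_*$ of the relevant torsion quotients of $R^1\pi_{K,*}\GG_m$ on $U$. A cleaner route at this step is to invoke absolute purity for Brauer groups (Gabber--Fujiwara) applied to the smooth divisor $\CX_s \subset \CX_S$: the resulting residue sequence reduces the cokernel of $\Br(\CX_S)(\ell) \to \Br(\CX_\eta)(\ell)$ to a piece of $H^1(\CX_s,\QQ_\ell/\ZZ_\ell)$, which must then be shown to vanish under the good reduction hypothesis. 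I expect this to be the main obstacle, since it requires leveraging the smooth-proper assumption precisely where the Kummer five-lemma alone is not strong enough.
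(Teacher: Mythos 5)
Part (a) of your proposal follows the paper's argument: pass to stalks over a strictly henselian trait, show the kernel of $\Pic(\CX_S)\to\Pic$ of the generic fiber is generated by vertical divisors and that the full special fiber is principal, and get surjectivity from regularity. Two small points you should make explicit: the kernel is killed only because the special fiber is \emph{irreducible} (the paper deduces connectedness from $\pi_*\SO_{\CX}=\SO_U$; smoothness then gives irreducibility), and the stalk of $j_*j^*R^1\pi_*\GG_m$ at $\bar v$ is not $H^1(\CX_\eta,\GG_m)$ but $\Pic(X_{\bar\eta})^{I}=\Pic_{X/K}(K_v^{sh})$; these coincide with $\Pic(\CX_\eta)$ only because $\pi$ smooth over a strictly henselian base forces a section, hence a rational point of the generic fiber. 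The paper uses exactly this section.

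For part (b) there is a genuine gap, and it sits precisely where you flagged the difficulty. First, the same stalk issue recurs and is no longer harmless: $(j_*j^*R^2\pi_*\GG_m)_{\bar v}(\ell)$ is $\Br(X_{\bar\eta})^{I}(\ell)$, the inertia invariants of the \emph{geometric} Brauer group of the generic fiber, not $\Br(\CX_\eta)(\ell)=H^2(X_{K_v^{sh}},\GG_m)(\ell)$. Your fallback via Gabber--Fujiwara purity computes the cokernel of $\Br(\CX_S)(\ell)\to\Br(\CX_\eta)(\ell)$, which is the wrong map; moreover the residue target $H^1(\CX_s,\QQ_\ell/\ZZ_\ell)$ does not vanish in any interesting case (for a relative curve of genus $g$ it is $(\QQ_\ell/\ZZ_\ell)^{2g}$), so ``show it vanishes'' cannot be the right endgame --- at best one could hope the residue map is zero, which you would still have to prove. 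The paper closes the gap differently: on stalks the map is $\Br(\CX_S)(\ell)\to\Br(X_{\bar\eta})^{I}(\ell)$; smooth-proper base change gives $H^2(\CX_S,\mu_{\ell^\infty})\cong H^2(X_{\bar\eta},\mu_{\ell^\infty})=H^2(X_{\bar\eta},\mu_{\ell^\infty})^{I}$, and the whole content is then the surjectivity of
$$\Pic(\CX_S)\otimes\QQ_\ell/\ZZ_\ell\lra\bigl(\NS(X_{\bar\eta})\otimes\QQ_\ell/\ZZ_\ell\bigr)^{I},$$
after which the snake lemma finishes. That surjectivity is proved using the section again ($\Pic(\CX_S)=\Pic(X)$), the fact that $I$ acts on $\NS(X_{\bar\eta})$ through a finite quotient so that $(\NS\otimes\QQ_\ell/\ZZ_\ell)^{I}$ is divisible and the relevant $H^1$ obstruction is killed by a finite order, and the surjectivity of $\Pic(X)\otimes\QQ_\ell\to\NS(X_{\bar\eta})^{I}\otimes\QQ_\ell$ from \cite[\S 2.2]{Yua2}. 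This last input --- that inertia-invariant N\'eron--Severi classes come from $\Pic(X)$ rationally --- is the idea missing from your proposal; neither the five-lemma on Kummer sequences nor absolute purity supplies it.
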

\begin{proof}
It suffices to show that the induced maps on stalks are isomorphism. Thus, we may assume that $U=\Spec R$ where $R$ is a strictly henselian DVR.

Let $X$ denote the generic fiber. Let $s \in U$ be the closed point. Then we have
 $$(R^1\pi_*\GG_m)_{\bar{s}}=\Pic(\CX) \quad \mathrm{and} \quad (j_*j^*R^1\pi_*\GG_m)_{\bar{s}}=\Pic_{X/K}(K).$$ 
Since $\CX_s$ admits a section $s\lra \CX_s$ and $\pi$ is smooth, the section can be extended to a section $U\lra \CX$. Thus $X(K)$ is not empty. So
$$\Pic_{X/K}(K)=\Pic(X).$$
Since $\CX$ is regular, the natural map
$$\Pic(\CX)\lra\Pic(X)$$
is surjective and has a kernel generated by vertical divisors. It suffices to show that $\CX_s$ is connected. This actually follows from $\pi_*\SO_{\CX}=\SO_U$ (cf. \cite[Chap. III, Cor. 11.3]{Har}). This proves $(a)$.

Let $I$ denote $G_K$. For $(b)$, the induced map on the stalk at $s$ is
$$\Br(\CX)(\ell)\lra \Br(X_{K^s})^I(\ell).
$$
Since $\pi$ is smooth and proper, we have 
$$ H^2(\CX,\mu_{\ell^\infty})\cong H^2(X_{K^s}, \mu_{\ell^\infty})= H^2(X_{K^s}, \mu_{\ell^\infty})^I.
$$
Consider\\
\begin{displaymath}
\xymatrix{
0\ar[r] & \Pic(\CX)\otimes \QQ_\ell /\ZZ_\ell\ar[r] \ar[d] & H^2(\CX , \mu_{\ell^\infty}) \ar[r] \ar[d] & \Br(\CX)(\ell) \ar[r]\ar[d] & 0  \\
0\ar[r] & \NS(X_{K^s})\otimes \QQ_\ell /\ZZ_\ell \ar[r]  & H^2(X_{K^s}, \mu_{\ell^\infty}) \ar[r]  & \Br(X_{K^s})(\ell) \ar[r] & 0 }
\end{displaymath}
Since $\NS(X_{K^s})\otimes \QQ_\ell /\ZZ_\ell$ is $I$-invariant and $\Pic(\CX)=\Pic(X)$. It suffices to show that
$$\Pic(X)\otimes \QQ_\ell /\ZZ_\ell \longrightarrow (\NS(X_{K^s})\otimes \QQ_\ell /\ZZ_\ell)^I
$$ 
is surjective. Write $\NS(X_{K^s})_{free}$ for $\NS(X_{K^s})/\NS(X_{K^s})_{\tor}$. The action of $I$ on $\NS(X_{K^s})$ factors through a finite quotient $I^\prime$. Consider the exact sequence
$$ 0\longrightarrow (\NS(X_{K^s})_{free})^{I^\prime} \otimes \ZZ_{\ell}\longrightarrow  \NS(X_{K^s})^{I^\prime}\otimes \QQ_{\ell}\longrightarrow  (\NS(X_{\bar{K}}) \otimes \QQ_{\ell}/\ZZ_\ell)^I$$
$$
\longrightarrow H^1(I^\prime,\NS(X_{K^s})_{free} \otimes \ZZ_{\ell}).
$$
$H^1(I^\prime,\NS(X_{K^s})_{free} \otimes \ZZ_{\ell})$ is killed by the order of $I^\prime$. Since $(\NS(X_{K^s}) \otimes \QQ_{\ell}/\ZZ_\ell)^I=\NS(X_{K^s}) \otimes \QQ_{\ell}/\ZZ_\ell$ is divisible, so the image of the last map is zero. Since 
$$\Pic(X)\otimes\QQ_\ell \longrightarrow \NS(X_{K^s})^I \otimes \QQ_{\ell}
$$
is surjective (cf. \cite[\S2.2]{Yua2}), the claim follows. By the Snake Lemma, the natural map
$$\Br(\CX)(\ell) \lra \Br(X_{K^s})^{I}(\ell)$$
is an isomorphism.

\end{proof}

\begin{lem}\label{exact}
Let $U$ be a smooth geometrically connected curve over a field $k$ of characteristic $p\geq 0$ with function field $K$. Let $\pi:\CX\lra U$ be a smooth proper morphism with a generic fiber $X$ geometrically connected over $K$. Let $K^\prime$ denote $Kk^s$.
Define
 $$\Sha_{U_{k^s}}(\Pic_{X/K}):=\Ker(H^1(K^\prime,\Pic(X_{K^s}))\lra \prod_{v\in |U_{k^s}|} H^1(K_v^{sh}, \Pic(X_{K^s}))).
$$
Then we have a canonical exact sequence
$$0\ra(\Sha_{U_{k^s}}(\Pic_{X/K}))^{G_k}(\non p)\ra \Br(\CX_{k^s})^{G_k}(\non p)\ra \Br(X_{K^s})^{G_K}(\non p).$$
Moreover, the natural map
$$(\Sha_{U_{k^s}}(\Pic^0_{X/K,\red}))^{G_k}\lra (\Sha_{U_{k^s}}(\Pic_{X/K}))^{G_k}
$$
has a kernel and cokernel of finite exponent.

\end{lem}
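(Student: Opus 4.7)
The plan is to analyze the Leray spectral sequence $E_2^{p,q} = H^p(U_{k^s}, R^q\pi_*\GG_m) \Rightarrow H^{p+q}(\CX_{k^s}, \GG_m)$ and use Lemma \ref{picard} to identify its low-degree terms with the Sha group and Galois-invariant Brauer group in the statement. Since $\pi$ is smooth proper with geometrically connected generic fiber over the normal base $U$, Stein factorization combined with flat base change forces every fiber of $\pi$ to be geometrically connected; hence $\pi_*\GG_m = \GG_m$ on $U_{k^s}$. Together with the vanishing $H^i(U_{k^s}, \GG_m)(\non p) = 0$ for $i \geq 2$, the Leray filtration on $H^2(\CX_{k^s}, \GG_m)$ then yields, in the $(\non p)$-part, an exact sequence
\begin{equation*}
0 \lra H^1(U_{k^s}, R^1\pi_*\GG_m)(\non p) \lra \Br(\CX_{k^s})(\non p) \lra H^0(U_{k^s}, R^2\pi_*\GG_m)(\non p).
\end{equation*}

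Next I would identify the outer terms. Let $j:\Spec K^\prime\lra U_{k^s}$ denote the generic point. By Lemma \ref{picard}(a), $R^1\pi_*\GG_m\cong j_*j^*R^1\pi_*\GG_m$, and $j^*R^1\pi_*\GG_m$ corresponds to the $G_{K^\prime}$-module $\Pic(X_{K^s})$. The Leray spectral sequence for $j$, together with the identification of stalks $(R^1j_*\mathcal{F})_{\bar v} = H^1(K_v^{sh}, \mathcal{F})$ at each closed point $v$, gives $H^1(U_{k^s}, j_*j^*R^1\pi_*\GG_m) = \Sha_{U_{k^s}}(\Pic_{X/K})$. Similarly, Lemma \ref{picard}(b) together with $(j^*R^2\pi_*\GG_m)_{\bar\eta} = H^2(X_{K^s}, \GG_m)$ identifies $H^0(U_{k^s}, R^2\pi_*\GG_m)(\non p)$ with $\Br(X_{K^s})^{G_{K^\prime}}(\non p)$. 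Taking $G_k$-invariants of the three-term exact sequence and using that geometric connectedness of $U$ yields $G_K/G_{K^\prime}\cong G_k$ (hence $(\Br(X_{K^s})^{G_{K^\prime}})^{G_k}=\Br(X_{K^s})^{G_K}$) completes the left exactness.

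For the ``moreover'' clause, I would work with the short exact sequence of $G_{K^\prime}$-modules
\begin{equation*}
0 \lra \Pic^0_{X/K,\red}(K^s) \lra \Pic(X_{K^s}) \lra \NS(X_{K^s}) \lra 0,
\end{equation*}
which is exact because $\Br(K^s) = 0$ and infinitesimal group schemes have no $K^s$-points. Comparing the resulting long exact sequences over $K^\prime$ and over each $K_v^{sh}$ via a diagram chase, the kernel of the induced map on Sha groups sits inside the image of $\NS(X_{K^s})^{G_{K^\prime}}$ in the torsion group $H^1(K^\prime, \Pic^0_{X/K,\red})$, a torsion quotient of a finitely generated group, hence finite. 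The cokernel maps into $H^1(K^\prime, \NS(X_{K^s}))$ with kernel bounded by a product of local images of $\NS(X_{K^s})^{G_{K_v^{sh}}}$ in $H^1(K_v^{sh}, \Pic^0_{X/K,\red})$ of exponent uniformly bounded by $|\NS(X_{K^s})_{\tor}|$; and $H^1(K^\prime, \NS(X_{K^s}))$ itself has finite exponent by inflation-restriction applied to the finite quotient through which $G_{K^\prime}$ acts on the finitely generated $\NS(X_{K^s})$. Taking $G_k$-invariants preserves all these bounds.

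The principal technical obstacle is the cokernel estimate in the ``moreover'' part: an element $\gamma\in\Sha_{U_{k^s}}(\Pic_{X/K})$ that dies in $H^1(K^\prime,\NS(X_{K^s}))$ lifts to some $\tilde\gamma\in H^1(K^\prime,\Pic^0_{X/K,\red})$, but the local restrictions $\tilde\gamma_v$ need not vanish. The obstruction to correcting $\tilde\gamma$ to an element of $\Sha_{U_{k^s}}(\Pic^0_{X/K,\red})$ lives in the product of local obstruction groups modulo the global one, and uniformly controlling this across all places $v$ is the delicate step that drives the argument.
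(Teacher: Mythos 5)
Your proof of the left exactness is essentially the paper's: the same Leray spectral sequence for $\pi$ over $U_{k^s}$, the vanishing of $H^i(U_{k^s},\GG_m)(\non p)$ for $i\geq 2$, and Lemma \ref{picard} to replace $R^q\pi_*\GG_m$ by $j_*j^*R^q\pi_*\GG_m$; your explicit treatment of the $G_k$-invariants is fine (the paper simply reduces to $k=k^s$). The kernel half of the ``moreover'' clause is also correct, and in fact slightly more direct than the paper's: the kernel of $H^1(K^\prime,\Pic^0(X_{K^s}))\to H^1(K^\prime,\Pic(X_{K^s}))$ is the image of the finitely generated group $\NS(X_{K^s})^{G_{K^\prime}}$ in a torsion group, hence finite.

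The genuine gap is in the cokernel estimate, and it is exactly the point you flag at the end as ``the delicate step'': you need the image of the local connecting map $\delta_v\colon \NS(X_{K^s})^{G_{K_v^{sh}}}\to H^1(K_v^{sh},\Pic^0(X_{K^s}))$ to be killed by an integer independent of $v$, and you assert without argument that $|\NS(X_{K^s})_{\tor}|$ is such a bound. It is not: if $c\in\NS(X_{K^s})^{G_{K_v^{sh}}}$ is a non-torsion class of which only the multiple $nc$ lifts to a $G_{K_v^{sh}}$-invariant line bundle class, then $\delta_v(c)$ has order $n$, which has nothing to do with $\NS(X_{K^s})_{\tor}$ and could a priori vary with $v$. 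The missing mechanism (the one the paper uses) is to fix once and for all a finite Galois extension $L/K$ with $\Pic(X_L)\to\NS(X_{K^s})$ surjective. Then every connecting map, global or local, dies after restriction to the corresponding open subgroup of index dividing $[L:K]$, so each $\Im(\delta_v)$ is killed by $[L:K]$ uniformly in $v$, and $H^1(K^\prime,\NS(X_{K^s}))$ is killed by $[L:K]\,|\NS(X_{K^s})_{\tor}|$ by inflation-restriction through $G_{L k^s}$. With that single uniform constant in hand, your diagram chase closes; without it, the argument does not.
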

\begin{proof}
We may assume that $k=k^s$. 
By the Leray spectral sequence
$$E^{p,q}_2=H^p(U,R^q\pi_*\GG_m)\Rightarrow H^{p+q}(\CX,\GG_m),
$$
we get a long exact sequence\\
$$H^2(U,\mathbb{G}_m)\longrightarrow \Ker(H^2(\CX,\mathbb{G}_m)
\longrightarrow H^0(U, R^2\pi_*\mathbb{G}_m))$$
$$\longrightarrow H^1(U,R^1\pi_*\mathbb{G}_m)\longrightarrow H^3(U,\mathbb{G}_m)
$$
By \cite[Lem. 3.2.1]{Gro3}, $H^i(U,\GG_m)(\non p)=0$ for $i\geq 2$. Thus, we get a canonical exact sequence
$$ 0\ra H^1(U,R^1\pi_*\GG_m)(\non p) \ra H^2(\CX,\GG_m) (\non p)\ra H^0(U,R^2\pi_*\GG_m)(\non p).$$
Let $j:\Spec K \lra U$ be the generic point. By Lemma \ref{picard}, we have  canonical isomorphisms
$$H^1(U,R^1\pi_*\GG_m)(\non p)\cong H^1(U,j_*j^*R^1\pi_*\GG_m)(\non p)$$ 
and
$$H^0(U,R^2\pi_*\GG_m)(\non p)\cong H^0(U,j_*j^*R^2\pi_*\GG_m)(\non p).
$$
Since $j^*R^i\pi_*\GG_m$ corresponds to the $G_K$-module $H^i(X_{K^s},\GG_m)$, we have
$$H^1(U,j_*j^*R^1\pi_*\GG_m)=\Sha_{U}(\Pic_{X/K})\quad \mathrm{and} \quad 
H^0(U,j_*j^*R^2\pi_*\GG_m)= \Br(X_{K^s})^{G_K}.
$$
This proves the first claim.

For the second claim, consider the exact sequence
$$0\lra \Pic^0(X_{K^s})\lra \Pic(X_{K^s})\lra \NS(X_{K^s})\lra 0.$$
Since $\NS(X_{K^s})$ is finitely generated, there exists a finite Galois extension $L/K$ such that
$\Pic(X_{L})\lra\NS(X_{K^s})$
is surjective. Taking cohomoloy, we get a long exact sequence
$$0\lra H^0(K,\Pic^0(X_{K^s}))\lra H^0(K,\Pic(X_{K^s}))\lra H^0(K,\NS(X_{K^s}))$$
$$\stackrel{a_K}{\lra} H^1(K,\Pic^0(X_{K^s}))\lra H^1(K,\Pic(X_{K^s}))\lra H^1(K,\NS(X_{K^s})).$$
We have a similar long exact sequence for $H^i(L,-)$. Since $\Pic(X_L)\rightarrow \NS(X_{K^s})$ is surjective, we have $a_L=0$ and 
$$H^1(L,\NS(X_{K^s}))=\Hom(G_L,\NS(X_{K^s}))=\Hom(G_L,\NS(X_{K^s})_{\tor}).$$
$a_L=0$ implies that the image of $a_K$ is contained in
$$ \Ker(H^1(K,\Pic^0(X_{K^s}))\lra H^1(L,\Pic^0(X_{K^s})))$$
which is killed by $[L:K]$. Similarly, one can show that $H^1(K,\NS(X_{K^s}))$ is killed by $[L:K]|\NS(X_{K^s})_{\tor}|$. Therefore, the kernel and cokernel of 
$$H^1(K,\Pic^0(X_{K^s}))\lra H^1(K,\Pic(X_{K^s}))$$
are killed by $[L:K]|\NS(X_{K^s})_{\tor}|$. The claim also holds for 

$$H^1(K_v^{sh},\Pic^0(X_{K^s}))\lra H^1(K^{sh}_v,\Pic(X_{K^s})).$$
By diagram chasings, the kernel and cokernel of
$$\Sha_{U}(\Pic^0_{X/K,\red})\lra \Sha_{U}(\Pic_{X/K})
$$
is of finite exponent. This completes the proof.
\end{proof}

\subsection{The pull-back trick}
In this section, we will introduce a pull-back trick initially developed by Colliot-Thél\`ene and Skorobogatov in \cite{CTS1} which allows us to reduce the question to cases of relative dimension $1$.

Let $U$ be a regular integral excellent scheme of dimension $1$ with function field $K$. Let $\pi:\CX\lra U$ be a smooth projective morphism with the generic fiber $X$ geometrically connected over $K$. The Leray spectral sequence 
$$E_2^{p,q}=H^p(U,R^q\pi_*\GG_m)\Rightarrow H^{p+q}(\CX,\GG_m)$$
induces canonical maps
$$d^{1,1}_2:E_2^{1,1}\lra E_2^{3,0},$$
$$d_3^{0,2}:E_3^{0,2}\lra E_3^{3,0},$$
$$d_2^{0,2}:E_2^{0,2}\lra E_2^{2,1}.$$
\begin{lem}\label{keylem}
Assuming that $X(K)$ is not empty and the natural map $\Pic(X)\rightarrow\NS(X_{K^s})$ is surjective, then the canonical maps $d^{1,1}_2$ and $d_3^{0,2}$ vanish and $E_3^{3,0}=E_2^{3,0}$. Moreover, there exists an open dense subscheme $V\subseteq U$ such that the canonical map $d_2^{0,2}$ has an image of finite exponent when replacing $U$ by $V$. As a result, the natural map
$$H^2(\pi^{-1}(V),\GG_m)\lra H^0(V,R^2\pi_*\GG_m)$$
has a cokernel of finite exponent.
\end{lem}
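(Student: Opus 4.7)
The overall plan is in three stages corresponding to the three assertions. First, use the $K$-point to extract a section of $\pi$, which will force the vanishing statements in (1) by a standard spectral sequence argument. Then apply the pull-back trick of Colliot-Th\'el\`ene and Skorobogatov, together with Artin's vanishing $R^2\pi^i_*\GG_m=0$ for relative curves, to bound the image of $d_2^{0,2}$ in finite exponent after shrinking $U$. Finally read off the cokernel statement from the resulting description of $E_\infty^{0,2}$.

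For (1), since $\pi$ is proper and $U$ is regular of dimension $1$, the hypothesis $X(K)\neq\emptyset$ and the valuative criterion produce a section $\sigma:U\to\CX$ of $\pi$. Smoothness together with geometric connectedness of the fibres gives $\pi_*\GG_m=\GG_m$, so the edge map $\pi^*:H^n(U,\GG_m)=E_2^{n,0}\to H^n(\CX,\GG_m)$ admits $\sigma^*$ as a retraction. Split injectivity of $\pi^*$ is well known to force every differential $d_r$ with target in the bottom row $E_r^{\bullet,0}$ to vanish, for all $r\geq 2$. Specializing to $n=3$ yields $d_2^{1,1}=0$ and $d_3^{0,2}=0$, and $d_2^{1,1}=0$ is precisely the statement $E_3^{3,0}=E_2^{3,0}$.

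For (2), shrink $U$ to a dense open $V$ on which $\pi$ is smooth projective. Following \cite[\S 3.3]{Yua2}, choose finitely many smooth projective geometrically connected curves $Y_i\subseteq X$, realized as complete intersections of sections of a sufficiently ample line bundle, together with an auxiliary abelian variety $A/K$, such that the $G_K$-equivariant restriction map
\begin{displaymath}
\Pic(X_{K^s})\times A(K^s)\longrightarrow\bigoplus_i\Pic(Y_{i,K^s})
\end{displaymath}
has finite kernel and cokernel. Shrinking $V$ further, spread $A$ to an abelian scheme $\SA\to V$ and each $Y_i$ to a smooth projective relative curve $\pi^i:\CY_i\to V$ with a $V$-morphism $\CY_i\to\pi^{-1}(V)$. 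The restriction maps on relative Picard sheaves assemble, in combination with the chosen $V$-map $\SA\to\bigoplus_i\Pic^0_{\CY_i/V}$, into a morphism of \'etale sheaves
\begin{displaymath}
R^1\pi_*\GG_m\oplus\SA\longrightarrow\bigoplus_i R^1\pi^i_*\GG_m
\end{displaymath}
whose kernel and cokernel are killed by some integer $m$; it is therefore split up to subsheaves of exponent $m$. Applying $H^2(V,-)$, the induced map on cohomology has kernel killed by a fixed power of $m$. By Artin's theorem \cite[Cor.~3.2]{Gro3}, $R^2\pi^i_*\GG_m=0$ for each relative curve, so $d_2^{0,2}$ for $\pi^i$ vanishes trivially. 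The commutative diagram of Leray spectral sequences induced by $\CY_i\to\pi^{-1}(V)$ (as displayed in the introduction) then forces $\Im(d_2^{0,2}$ for $\pi)$ to lie in this kernel, so it has finite exponent.

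For the final assertion, the differentials $d_r^{0,2}$ for $r\geq 4$ vanish because their targets $E_r^{r,3-r}$ are zero for degree reasons, and $d_3^{0,2}=0$ by (1); hence $E_\infty^{0,2}=\Ker(d_2^{0,2})$. The edge map $H^2(\pi^{-1}(V),\GG_m)\to H^0(V,R^2\pi_*\GG_m)=E_2^{0,2}$ has image equal to $E_\infty^{0,2}$, so its cokernel is canonically $E_2^{0,2}/\Ker(d_2^{0,2})\cong\Im(d_2^{0,2})$, which is of finite exponent by (2). The main obstacle is Step (2): upgrading the Lefschetz-type map on generic fibres to a sheaf-level quasi-splitting after spreading out, and in particular showing that once $A$ is extended to an abelian scheme on a small enough $V$ the resulting kernel and cokernel on Picard sheaves are globally killed by a single integer rather than merely finite fibrewise.
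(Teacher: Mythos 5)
Your route coincides with the paper's at every stage: the section coming from $X(K)\neq\emptyset$ kills the differentials into the bottom row (the paper phrases this as a comparison with the degenerate spectral sequence for $\mathrm{id}_U$, you phrase it via split injectivity of the edge map — these are the same argument), the pull-back to relative curves $\CY_i$ together with Artin's vanishing $R^2\pi^i_*\GG_m=0$ controls $d_2^{0,2}$, and the final cokernel statement is read off from $E_\infty^{0,2}=\Ker(d_2^{0,2})$ exactly as in the paper. Parts (1) and (3) are complete as written.

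The one real issue is the point you flag yourself at the end of step (2): you want a morphism of \'etale sheaves on $V$, namely $R^1\pi_*\GG_m\oplus\SA\lra\bigoplus_i R^1\pi^i_*\GG_m$, whose kernel and cokernel are killed by a single integer, and it is not automatic that finiteness on the generic fibre propagates to a global exponent bound on $V$. The paper sidesteps this entirely: by Lemma \ref{picard}(a) one has $R^1\pi_*\GG_m\cong j_*j^*R^1\pi_*\GG_m$ and likewise for each $\pi^i$, where $j:\Spec K\lra U$ is the generic point, so it suffices to produce the quasi-splitting at the level of $G_K$-modules, i.e.\ a $G_K$-module $M$ with a map $\Pic(X_{K^s})\oplus M\lra\bigoplus_i\Pic(Y_{i,K^s})$ having finite kernel and cokernel. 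Finite groups are killed by a single integer for free, and applying $j_*$ followed by $H^2(U,-)$ to a map of $G_K$-modules with kernel and cokernel killed by $m$ gives a map whose kernel has exponent bounded by a power of $m$ — which is all that is needed, since the target $\bigoplus_i{}^iE_2^{0,2}$ vanishes by Artin's theorem and hence $\Im(d_2^{0,2})$ lands in that kernel. Yuan's construction supplies $M=A(K^s)$. Your sheaf-level variant can be repaired (both $R^1\pi_*\GG_m$ and the N\'eron model $\SA=j_*A$ embed into $j_*$ of their generic fibres, so the kernel sheaf is killed by $m$; the cokernel requires a further small argument), but transporting the comparison to $\Spec K$ first, as the paper does, is the cleaner fix for the obstacle you identified.
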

\begin{proof}
Let $s\in X(K)$.  Since $\pi$ is proper, it extends to a section $s:U\lra \CX$. Let $\tilde{E}_2^{p,q}$ denote the Leray spectral sequence for the identity map $U\lra U$. Then $s$ induces a commutative diagram
\begin{displaymath}
\xymatrix{E_2^{1,1}\ar[r]^{d_2^{1,1}}\ar[d]& E_2^{3,0}\ar[d] \\
\tilde{E}_2^{1,1}\ar[r]& \tilde{E}_2^{3,0}}
\end{displaymath}
The second column is an isomorphism since $E_2^{3,0}=H^3(U,\GG_m)=\tilde{E}_2^{3,0}$. Since $\tilde{E}_2^{1,1}=0$, thus $d_2^{1,1}=0$. By definition, $d_2^{3,0}=0$, it follows that $E_3^{3,0}=E_2^{3,0}$. By the same arguments, we have $d_3^{0,2}=0$.

For the proof of the second claim, we will use the pullback method for finitely many $U$-morphisms $\CY_i\lra\CX$ where $\CY_i$ is of relative dimension $1$ over $U$. Let $^iE^{p,q}_2$ denote the Leray spectral sequence for $\pi_i:\CY_i\lra U$. There is a commutative diagram
\begin{displaymath}
\xymatrix{E_2^{0,2} \ar[r]^{d_2^{0,2}}\ar[d]& E_2^{2,1}\ar[d] \\
\oplus_i \,^iE_2^{0,2}\ar[r]& \oplus_i \,^iE_2^{2,1}}
\end{displaymath}
By shrinking $U$, we can assume that $\pi_i$ is smooth and projective for all $i$. By Artin's theorem \cite[Cor. 3.2]{Gro3}, $R^2\pi_{i,*}\GG_m=0$. Thus, to show that $d_2^{0,2}$ has an image of finite exponent, it suffices to show that the second column has a kernel of finite exponent. The second column is the map
$$ H^2(U,R^1\pi_*\GG_m)\lra \bigoplus_iH^2(U,R^1\pi_{i,*}\GG_m).$$
By Lemma \ref{picard}$(a)$, we have a canonical isomorphism
$$R^1\pi_{i,*}\GG_m\cong j_*j^*R^1\pi_{i,*}\GG_m,$$
where $j:\Spec K\lra U$ is the generic point. It suffices to show that there exists an \'etale sheaf $\SF$ on $\Spec K$ and a morphism $\SF\lra \oplus_ij^*R^1\pi_{i,*}\GG_m$ such that the induced map
\begin{equation}\label{splitness}
j^*R^1\pi_{*}\GG_m\oplus\SF \lra \bigoplus_ij^*R^1\pi_{i,*}\GG_m
\end{equation}
has a kernel and a cokernel killed by some positive integer. This will imply that the induced map
$$H^2(U,j_*j^*R^1\pi_{*}\GG_m)\oplus H^2(U,j_*\SF) \lra \bigoplus_i H^2(U,j_*j^*R^1\pi_{i,*}\GG_m)
$$
has a kernel and a cokernel of finite exponent. Since $j^*R^1\pi_*\GG_m$ corresponds to the $G_K$-module $\Pic(X_{K^s})$, (\ref{splitness}) can be interpreted as that there is a $G_K$-module $M$ and a $G_K$-equivariant map $M\lra \oplus_{i}\Pic(Y_{i,K^s})$ such that
$$\Pic(X_{K^s})\oplus M\lra \bigoplus_{i}\Pic(Y_{i,K^s})$$
has a kernel and a cokernel of finite exponent. In \cite[\S 3.3, p17]{Yua2},
Yuan proved that there exist smooth projective curves $Y_i$ in $X$ which are obtained by taking intersection of very ample divisors and an abelian subvariety $A$ of $\prod_i\Pic^0_{Y_i/K}$ such that the induced morphism
$$\Pic(X_{K^s})\times A(K^s)\lra \prod_i\Pic(Y_{i,K^s})$$
has a finite kernel and a finite cokernel. Taking $\CY_i$ to be the Zariski closure of $Y_i$ in $\CX$, then shrinking $U$, we get smooth and proper morphisms $\pi:\CY_i\lra U$. This proves the second claim.

For the last claim, consider the following canonical exact sequences induced by the Leray spectral sequence,
$$E^2\lra E_{4}^{0,2}\lra 0,$$
$$0\lra E_{4}^{0,2}\lra E_{3}^{0,2}\stackrel{d_3^{0,2}}{\lra} E_{3}^{3,0},$$
$$0\lra E_{3}^{0,2}\lra E_{2}^{0,2}\stackrel{d_2^{0,2}}{\lra} E_{2}^{2,1}.$$
Since $d_3^{0,2}$ vanishes and $d_2^{0,2}$ has an image of finite exponent, it follows that
$$E^2\lra E_2^{0,2}$$
has a cokernel of finite exponent. This completes the proof.

\end{proof}
\subsection{Proof of Theorem \ref{thm1.2}}
Now we prove Theorem \ref{thm1.2}. Let $U$ be an open dense subscheme of $C$ such that $\pi$ is smooth and projective over $U$. By Lemma \ref{exact}, the natural map
$$(\Sha_{U_{k^s}}(\Pic^0_{X/K,\red}))^{G_k}(\non p)\rightarrow \Ker(\Br(\pi^{-1}(U_{k^s}))^{G_k}(\non p)\rightarrow\Br(X_{K^s})^{G_K}(\non p))$$
has a kernel of finite exponent. By Lemma \ref{cofsha} and \ref{cofbr}, all groups here are of cofinite type, thus the kernel is actually finite. By Proposition \ref{purity} and \ref{geosha}, it suffices to show that the natural map
$$\Br(\pi^{-1}(U_{k^s}))^{G_k}(\non p)\lra \Br(X_{K^s})^{G_K}(\non p)$$
has a cokernel of finite exponent.

Firstly, we will prove this under the assumption that $X(K)$ is not empty and the natural map $\Pic(X)\rightarrow\NS(X_{K^s})$ is surjective. We still use $\CX$ to denote $\pi^{-1}(U)$. By Proposition \ref{purity}, without loss of generality, we can always replace $U$ by an open dense subset. By Lemma \ref{picard}$(b)$, we have
$$H^0(U, R^2\pi_*\GG_m)(\non p)\cong H^0(U, j_*j^*R^2\pi_*\GG_m)(\non p)=\Br(X_{K^s})^{G_K}(\non p),$$
so it suffices to show that the natural map induced by the Leray spectral sequence for $\pi$
$$ H^2(\CX,\GG_m)\lra H^0(U, R^2\pi_*\GG_m)$$
has a cokernel of finite exponent. By shrinking $U$, the claim follows from Lemma \ref{keylem}.

Secondly, we will show that the question can be reduced to the case that $X(K)$ is not empty and the natural map $\Pic(X)\rightarrow\NS(X_{K^s})$ is surjective. There exists a finite Galois extenion $L/K$ such that $X(L)$ is not empty and the natural map $\Pic(X_L)\rightarrow\NS(X_{K^s})$ is surjective.  Let $W$ be a smooth curve over $k$ with function field $L$. By shrinking $U$ and $W$, the map $\Spec L \lra \Spec K$ extends to a finite \'etale Galois covering map $W\lra U$. Let $\CX_W \lra W$ be the base change of $\CX\lra U$ to $W$. By the above arguments, the natural map
$$ \Br(\CX_W)(\non p)\lra\Br(X_{K^s})^{G_L}(\non p)$$
has a cokernel of finite exponent. Let $G$ denote $\Gal(L/K)$. The above map is compatible with the $G$-action. Taking $G$-invariant, we have that
$$\Br(\CX_W)^{G}(\non p)\lra\Br(X_{K^s})^{G_K}(\non p)$$
has a cokernel of finite exponent. Then the question is reduced to show that the natural map
$$\Br(\CX)\lra\Br(\CX_W)^{G}$$
has a cokernel of finite exponent. Consider the spectral sequence
$$H^p(G,H^q(\CX_W,\GG_m))\Rightarrow H^{p+q}(\CX,\GG_m).$$
Since $H^p(G,-)$ is killed by the order of $G$, by similar arguments as in Lemma \ref{keylem}, we can conclude that the cokernel of
$$H^2(\CX,\GG_m)\lra H^2(\CX_{W},\GG_m)^{G}$$
is of finite exponent. This completes the proof of the theorem.
\section{Applications}
\subsection{Proof of Theorem \ref{finitebrauer}}
Firstly, note that in the case that $k$ is finite, by \cite[Thm. 2.3]{Lic}, the finiteness of $\Br(\CX_{k^s})^{G_k}(\non p)$ is equivalent to the Tate conjecture for divisors on $\CX$.

We will prove the finiteness of $\Br(X_{K^s})^{G_K}(\non p)$ by the induction on the transcendence degree of $K/k$. If $K/k$ is of transcendence degree $0$, a smooth projective variety over $K$ can be view as a smooth projective variety over $k$. So the claim is true by assumptions.

Assume  that the claim is true for all extensions $l/k$ of transcendence degree $n$. Let $K/k$ be a finitely generated extension of transcendence degree $n+1$. Let $X/K$ be a smooth projective connected variety over $K$, by extending $K$, we may assume that $X/K$ is geometrically connected. $K$ can be regarded as the function field of a smooth projective geometrically connected curve $C$ over a field $l$ of transcendence degree $n$ over $k$. $X\lra \Spec K$ can be extended to a smooth morphism $\pi:\CX\lra C$. By Theorem \ref{thm1.2}, 
$$\Br(\CX_{l^s})^{G_k}(\non p)\lra \Br(\CX_{K^s})^{G_K}(\non p)
$$
has a finite cokernel. It suffices to show that $\Br(\CX_{l^s})^{G_l}(\non p)
$ is finite. By Proposition \ref{purity}, we may shrink $\CX$ such that there is an alteration $f:\CX^{\prime}\lra \CX$ such that $\CX^{\prime}$ admits a smooth projective compactification over a finite extension of $l$. Replacing $l$ by a finite extension will not change the question, so we may assume that $\CX^{\prime}$ admits a smooth projective compactification over $l$. By shrinking $\CX$, we may assume that $f$ is finite flat. Then the kernel of 
$$\Br(\CX_{l^s})(\non p)\lra\Br(\CX^\prime_{l^s})(\non p)$$
is killed by the degree of $f$ (cf. \cite[Prop. 3.8.4]{CTS2}) and therefore is finite. Since $\Br(\CX^\prime_{l^s})^{G_l}(\non p)$ is finite by induction, so $\Br(\CX_{l^s})^{G_l}(\non p)$ is finite. This completes the proof.

\subsection{Proof of Theorem \ref{reduction}}
Let $\ell\neq p$ be a prime, the the claim in Theorem \ref{finitebrauer} also holds when replacing the prime to $p$ part by the $\ell$ primary part in the statement. So by the same argument as in the proof of Theorem \ref{finitebrauer}, Theorem \ref{reduction} is true.

\subsection{Brauer groups of integral models of abelian varieties and K3 surfaces}
\begin{prop}
Let $C$ be a smooth projective geometrically connected curve over a finite field $k$ of characteristic $p>2$. Let $\pi:\CX \lra C$ be a projective flat morphism. Let $X$ denote the generic fiber of $\pi$. Assuming that
$\CX$ is regular and $X_{K^s}$ is an abelian variety over $K^s$, then there is an isomorphism up to finite groups
$$ \Sha(\Pic_{X/K,\red}^{0})(\non p)\cong \Br(\CX)(\non p).
$$

\end{prop}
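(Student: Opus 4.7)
The plan is to apply Corollary \ref{mainthm} and then show that the rightmost term of the resulting three-term exact sequence is finite, so that the sequence degenerates to the claimed isomorphism up to finite groups.

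First I would verify the hypotheses of Corollary \ref{mainthm}. Since $\CX$ is projective over $C$ and $C$ is projective over $k$, the scheme $\CX$ is of finite type over $k$. As $k$ is finite (hence perfect) and $\CX$ is regular by assumption, the classical criterion for smoothness over a perfect field implies that $\CX$ is smooth over $k$, and thus smooth projective over $k$. The generic fiber $X$ is smooth and geometrically connected over $K$ because $X_{K^s}$ is assumed to be an abelian variety over $K^s$. Corollary \ref{mainthm} therefore yields an exact sequence up to finite groups
$$0\rightarrow \Sha(\Pic_{X/K,\red}^0)(\non p)\rightarrow \Br(\CX)(\non p)\rightarrow \Br(X_{K^s})^{G_K}(\non p)\rightarrow 0,$$
and it remains to prove that the rightmost group is finite.

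For this, I would pick a finite extension $L/K$ over which the $K^s$-abelian variety structure on $X_{K^s}$ descends, meaning that there is an abelian variety $A$ over $L$ with $A_{K^s}\cong X_{K^s}$ as $G_L$-equivariant schemes over $K^s$. Such an $L$ exists by a standard spreading-out argument, since the identity element and multiplication law of the $K^s$-abelian variety $X_{K^s}$ are defined over some finite extension of $K$. Then $\Br(X_{K^s})^{G_K}(\non p)$ injects into $\Br(X_{K^s})^{G_L}(\non p)=\Br(A_{K^s})^{G_L}(\non p)$, and the latter group is finite by the Skorobogatov--Zarhin theorem for abelian varieties over finitely generated fields, as recalled in the remark following Theorem \ref{finitebrauer}.

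The only point requiring some care is the descent of the abelian variety structure on $X_{K^s}$ to a finite extension $L$ of $K$; once this is in hand, everything reduces to collecting the ingredients above, and no argument specific to this proposition is needed beyond assembling Corollary \ref{mainthm} with the known finiteness of the Galois invariants of the geometric Brauer group of an abelian variety.
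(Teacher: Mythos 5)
Your proof is correct and takes essentially the same route as the paper, which likewise deduces the statement from Corollary \ref{mainthm} together with the Skorobogatov--Zarhin finiteness theorem for abelian varieties. Your extra care in checking the hypotheses of Corollary \ref{mainthm} (regular over a perfect field implies smooth) and in descending the abelian-variety structure on $X_{K^s}$ to a finite extension $L/K$ before invoking Skorobogatov--Zarhin fills in details the paper leaves implicit.
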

\begin{proof}
By \cite[Thm. 1.1]{SZ1}, $\Br(X_{K^s})^{G_K}(\non p)$ is finite, the claim follows directly from Corollary \ref{mainthm}.
\end{proof}

\begin{prop}
Let $C$ be a smooth projective geometrically connected curve over a finite field $k$ of characteristic $p$. Let $\pi:\CX \lra C$ be a projective flat morphism. Assuming that $\CX$ is regular and the generic fiber $X$ of $\pi$ is a smooth projective geometrically connected surface over $K$ with $H^1(X_{K^s},\QQ_\ell)=0$ for some prime $\ell\neq p$, then the natural map
$$\Br(\CX)(\non p)\lra \Br(X_{K^s})^{G_K}(\non p)
$$
has a finite kernel and cokernel. As a result, $\Br(X_{K^s})^{G_K}(\non p)$ is finite if and only if $\Br(X_{K^s})^{G_K}(\ell)$ is finite for some prime $\ell\neq p$.
\end{prop}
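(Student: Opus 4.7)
The plan is to derive the proposition directly from Corollary \ref{mainthm} by observing that the hypothesis $H^1(X_{K^s},\QQ_\ell)=0$ forces the Picard variety $P := \Pic^0_{X/K,\red}$ to be trivial, thereby collapsing the Tate--Shafarevich term in the exact sequence of that corollary.

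First I would translate the cohomological vanishing into vanishing of $P$. The Kummer sequence on $X_{K^s}$ yields a short exact sequence
$$0\to H^0(X_{K^s},\GG_m)/\ell^n\to H^1(X_{K^s},\mu_{\ell^n})\to \Pic(X_{K^s})[\ell^n]\to 0,$$
and since $X$ is projective and geometrically connected one has $H^0(X_{K^s},\GG_m)=(K^s)^\times$, which is $\ell$-divisible for any $\ell\neq p$. Hence $H^1(X_{K^s},\mu_{\ell^n})\cong \Pic(X_{K^s})[\ell^n]$ as $G_K$-modules. Passing to Tate modules and using that $\NS(X_{K^s})$ is finitely generated (so it contributes trivially after inverting $\ell$), I obtain $V_\ell P\cong H^1(X_{K^s},\QQ_\ell(1))$. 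The hypothesis makes the right-hand side vanish, and since $\dim_{\QQ_\ell}V_\ell A = 2\dim A$ for any abelian variety $A$, the abelian variety $P$ is trivial.

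Having identified $P=0$, I would then apply Corollary \ref{mainthm} to the fibration $\pi:\CX\to C$, following the template of the preceding proposition in this section where the same corollary was used for a regular (but not necessarily smooth) $\CX$. Since $\Sha(P)$ vanishes trivially, the exact sequence up to finite groups of Corollary \ref{mainthm} collapses to show that the natural map $\Br(\CX)(\non p)\to \Br(X_{K^s})^{G_K}(\non p)$ has finite kernel and cokernel, which is the first claim.

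For the ``as a result'' equivalence: one direction is immediate. For the converse, finiteness of $\Br(X_{K^s})^{G_K}(\ell)$ for some $\ell\neq p$ gives, via the $\ell$-primary component of the comparison map above, finiteness of $\Br(\CX)(\ell)$. The main obstacle is propagating this single-$\ell$ finiteness to the full prime-to-$p$ part: this amounts to the $\ell$-independence of Brauer-group finiteness for the regular projective $3$-fold $\CX$ over the finite field $k$. To obtain this I would combine Yuan's comparison \cite[Cor.~1.4]{Yua2} between $\Br(\CX)$ and $\Br(\CX_{k^s})^{G_k}$ (applied after a de Jong alteration together with Lemma \ref{finiteker} to reduce to the smooth projective case) with Proposition \ref{prop2.1}, which turns the question into the $\ell$-independence of the Tate conjecture $T^1$ for divisors on an auxiliary smooth projective variety over $k$---the standard formulation of the Tate conjecture over finite fields.
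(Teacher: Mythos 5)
Your proposal is correct and follows the paper's own route: the paper likewise deduces $\Pic^0_{X/K,\red}=0$ from $H^1(X_{K^s},\QQ_\ell)=0$, so that $\Sha(\Pic^0_{X/K,\red})=0$, and then applies Corollary \ref{mainthm}, with your Kummer-sequence justification and the $\ell$-independence via Milne's theorems being exactly the intended (unwritten) details. The only superfluous step is the de Jong alteration in the last paragraph: since $k$ is finite, hence perfect, the regular projective scheme $\CX$ is already smooth projective over $k$, so Corollary \ref{mainthm}, Yuan's comparison and Milne's $\ell$-independence apply to $\CX$ directly.
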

\begin{proof}
$\Pic^0_{X/K,\red}=0$ since $H^1(X_{K^s},\QQ_\ell)=0$. It follows that
$$\Sha(\Pic_{X/K,\red}^{0})=0.$$
Then the claim follows from Corollary \ref{mainthm}.
\end{proof}
\begin{cor}
Notations as above, if $X/K$ is a $K3$ surface and $p>2$, then 
$\Br(\CX)$ is finite.
\end{cor}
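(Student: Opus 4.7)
The plan is to split $\Br(\CX)=\Br(\CX)(\non p)\oplus \Br(\CX)(p)$ into its prime-to-$p$ and $p$-primary parts; the former follows quickly from the preceding proposition plus Skorobogatov--Zarhin, while the latter is the real obstacle.

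For the prime-to-$p$ part: since $X$ is a K3 surface over $K$, one has $H^1(X_{K^s},\QQ_\ell)=0$ for every prime $\ell\neq p$, so the preceding proposition applies with any such $\ell$. It yields that the natural map $\Br(\CX)(\non p)\lra \Br(X_{K^s})^{G_K}(\non p)$ has finite kernel and cokernel, and that the target is finite as soon as $\Br(X_{K^s})^{G_K}(\ell)$ is finite for some one prime $\ell\neq p$. By the finiteness theorem of Skorobogatov and Zarhin for K3 surfaces over finitely generated fields of characteristic $\neq 2$ (\cite{SZ1,SZ2}), this last hypothesis is satisfied, so $\Br(\CX)(\non p)$ is finite.

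For the $p$-primary part: the argument of the preceding proposition is fundamentally $\ell$-adic, since it relies on Kummer theory and on the isomorphism $R^q\pi_*\GG_m(\ell)\cong j_*j^*R^q\pi_*\GG_m(\ell)$ of Lemma \ref{picard}, which is only valid for $\ell$ invertible on the base, so $\Br(\CX)(p)$ must be treated separately. The two-step plan is: (i) apply de Jong's alteration to produce a generically finite dominant morphism $f\colon\CX'\to\CX$ with $\CX'$ smooth projective over a finite extension of $k$; by Lemma \ref{finiteker} the kernel of $\Br(\CX)(p)\to\Br(\CX')(p)$ is killed by $\deg(f)$ and hence finite, so it suffices to bound $\Br(\CX')(p)$; (ii) on the smooth projective 3-fold $\CX'$, describe $\Br(\CX')(p)$ via flat cohomology with $\mu_{p^n}$-coefficients (equivalently via logarithmic de Rham--Witt sheaves) and run a flat-cohomology analogue of the Leray spectral sequence argument of Section 3.2, feeding in the full Tate conjecture, including its crystalline $p$-adic version, for K3 surfaces in characteristic $p>2$ due to Madapusi-Pera \cite{MP} and its refinements.

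The main obstacle is step (ii): one must replace both the $\ell$-adic Kummer sequence and the vanishing $R^2\pi_*\GG_m=0$ in relative dimension one by appropriate flat / de Rham--Witt analogues, and one must control possible divisible $p$-primary contributions arising from fibers at which the formal Brauer group has infinite height (notably supersingular fibers), a phenomenon with no analogue in the prime-to-$p$ picture. The input of Tate for K3 over $K$ only controls the generic-fiber piece $\Br(X_{K^s})^{G_K}(p)$ modulo divisibles, so the argument must propagate both finiteness and the absence of divisible contributions to the total space.
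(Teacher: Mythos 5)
Your prime-to-$p$ argument is fine and matches the first half of the paper's proof in substance: the preceding proposition applies because $H^1(X_{K^s},\QQ_\ell)=0$ for a K3 surface, and the finiteness of $\Br(X_{K^s})^{G_K}(\ell)$ for one $\ell\neq p$ is available (the paper quotes the Tate conjecture for K3 surfaces; Skorobogatov--Zarhin works equally well). But the $p$-primary part is a genuine gap: what you offer for step (ii) is a research program, not a proof. You yourself flag that the flat-cohomology/de Rham--Witt analogue of the Leray argument, the replacement for $R^2\pi_*\GG_m=0$, and the control of divisible $p$-primary contributions from supersingular fibers are all unresolved. As written, the proposal does not establish the finiteness of $\Br(\CX)(p)$, which is precisely the content that distinguishes this corollary from the preceding proposition.

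The idea you are missing is that no $p$-adic cohomological machinery is needed at all. Since $k$ is finite, hence perfect, and $\CX$ is regular and projective over $k$, it is smooth projective over $k$. By Milne's theorems (\cite{Mil5,Mil6}), for a smooth projective variety over a finite field the finiteness of $\Br(\CX)(\ell)$ for a \emph{single} prime $\ell\neq p$ already implies the finiteness of the whole group $\Br(\CX)$, $p$-part included (this $\ell$-independence, together with the equivalence with the Tate conjecture for divisors, is exactly what the paper invokes at the start of Section 4). So the paper's proof is three lines: the Tate conjecture for K3 surfaces gives finiteness of $\Br(X_{K^s})^{G_K}(\ell)$ for one $\ell\neq p$; the preceding proposition transfers this to finiteness of $\Br(\CX)(\ell)$; Milne then gives finiteness of $\Br(\CX)$. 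Your decomposition into $\Br(\CX)(\non p)\oplus\Br(\CX)(p)$ and the attempt to attack the second summand directly is the wrong strategy here: the $p$-part should be obtained for free from the $\ell$-part via Milne, not constructed independently.
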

\begin{proof}
Since Tate conjecture for $X$ is known by work of lots of people (cf. Conjecture \ref{Tateconj} ), so $\Br(X_{K^s})^{G_K}(\ell)$ is finite for some prime $\ell\neq p$. By the above proposition, $\Br(\CX)(\ell)$ is also finite. Then by \cite[Thm. 0.4]{Mil2}, $\Br(\CX)$ is finite.
\end{proof}
\begin{prop}
Let $X$ be a $K3$ surface over a global field $K$ of characteristic $p>2$.   Then  $\Br(X_{K^s})^{G_K}(\non p)$ is finite.
\end{prop}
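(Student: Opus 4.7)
The plan is to realize $X$ as the generic fiber of a regular projective flat morphism over a curve, after which the finiteness will follow by combining the two immediately preceding results. Since $K$ is a global field of characteristic $p>2$, we may write $K$ as the function field of a smooth projective geometrically connected curve $C$ over a finite field $k$ of characteristic $p$. Spreading out, the K3 surface $X/K$ extends to a projective flat $C$-scheme $\CX_0 \to C$ with generic fiber $X$.

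The first step, and essentially the only one that appeals to input from outside this paper, is to replace $\CX_0$ by a regular model. Since $\CX_0$ is a projective three-dimensional scheme over the finite field $k$, resolution of singularities for excellent three-folds in positive characteristic (Cossart--Piltant) produces a projective birational morphism $\CX \to \CX_0$ with $\CX$ regular. The induced morphism $\pi : \CX \to C$ is then projective and flat, with the same generic fiber $X$, and $\CX$ is regular, so the setup required by the two preceding statements is in place.

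The rest is formal. Because $X$ is a K3 surface, one has $H^1(X_{K^s}, \QQ_\ell) = 0$ for every prime $\ell \neq p$, so the immediately preceding proposition applies and gives that
$$ \Br(\CX)(\non p) \lra \Br(X_{K^s})^{G_K}(\non p) $$
has finite kernel and finite cokernel. The preceding corollary, which uses the Tate conjecture for K3 surfaces (known for $p>2$), tells us that $\Br(\CX)$ itself is finite; in particular $\Br(\CX)(\non p)$ is finite. Finiteness of the source together with finiteness of the cokernel of the displayed map forces $\Br(X_{K^s})^{G_K}(\non p)$ to be finite, which is the desired conclusion. Thus the only real obstacle is the construction of a regular proper three-dimensional model; once Cossart--Piltant is invoked, the entire argument is a two-line deduction from the preceding results of this section.
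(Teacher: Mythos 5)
Your proof is correct and takes essentially the same route as the paper: realize $K$ as the function field of a curve over a finite field, spread $X$ out to a projective flat model, invoke Cossart--Piltant resolution of three-folds to obtain a projective regular model, and then apply the two preceding results of this section. The paper's own proof is exactly this argument, compressed into two sentences.
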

\begin{proof} 
By previous results, it suffices to show that $X$ admits a projective regular model. By resolution of singularity of threefolds (cf. \cite{CP}), $X$ does admit a projective regular model.
\end{proof}
\section* {Acknowledgements}
I would like to thank my advisor Xinyi Yuan for suggesting this question to me and for telling me the pull-back trick. I also want to thank Jean-Louis Colliot-Thélène, Alexei N. Skorobogatov, Cristian D. Gonzalez-Aviles, Timo Keller, Dino Lorenzini, Thomas Geisser and the anonymous referees for very helpful comments and suggestions.


   \Addresses
\end{document}